\theoremstyle{plain}
\newtheorem{theorem}                {Theorem}      [section]
\newtheorem*{theorem*}                {Theorem \ref{main theorem}}
\newtheorem{proposition}  [theorem]  {Proposition}
\newtheorem{corollary}    [theorem]  {Corollary}
\theoremstyle{definition}
\newtheorem{remark}       [theorem]  {Remark}
\newtheorem{definition}   [theorem]  {Definition}
\DeclareMathOperator{\trace}{trace} 
\DeclareMathOperator{\grad}{grad}
 \DeclareMathOperator{\id}{I}
\DeclareMathOperator{\Span}{span}
\DeclareMathOperator{\cst}{constant}
 \DeclareMathOperator{\im}{Im}
\numberwithin{equation}{section}
\begin{document}

\title[Biharmonic surfaces with parallel mean curvature]{Biharmonic surfaces with parallel mean curvature in complex space forms}

\author{Dorel Fetcu}
\author{Ana Lucia Pinheiro}

\address{Department of Mathematics and Informatics\\
Gh. Asachi Technical University of Iasi\\
Bd. Carol I, 11 \\
700506 Iasi, Romania} \email{dfetcu@math.tuiasi.ro}

\curraddr{Department of Mathematics, Federal University of Bahia, Av.
Ademar de Barros s/n, 40170-110 Salvador, BA, Brazil}

\address{Department of Mathematics, Federal University of Bahia, Av.
Ademar de Barros s/n, 40170-110 Salvador, BA, Brazil} \email{anapinhe@ufba.br}

\thanks{The first
author was partially supported by a grant of the Romanian National Authority
for Scientific Research, CNCS -- UEFISCDI, project number
PN-II-RU-TE-2011-3-0108.}

\begin{abstract} We classify complete biharmonic surfaces with parallel mean curvature vector field and non-negative Gaussian curvature in complex space forms.
\end{abstract}

\subjclass[2010]{53C42, 58E20}

\keywords{biharmonic surfaces, surfaces with parallel mean curvature vector}

\maketitle

\section{Introduction}

The \textit{biharmonic maps} were suggested in 1964 by J. Eells and J. H. Sampson, as a generalization of \textit{harmonic maps} (see 
\cite{JEJS}). Thus, whereas a harmonic map $\varphi:(M,g)\rightarrow(N,h)$ between two Riemannian manifolds
is a critical point of the \textit{energy functional}
$$
E(\varphi)=\frac{1}{2}\int_{M}|d\varphi|^{2}\ v_{g},
$$
a biharmonic map is a critical point of the
\textit{bienergy functional}
$$
E_{2}(\varphi)=\frac{1}{2}\int_{M}|\tau(\varphi)|^{2}\ v_{g},
$$
where $\tau(\varphi)=\trace\nabla d\varphi$ is the tension field that
vanishes for harmonic maps. The Euler-Lagrange equation corresponding to the
bienergy functional was obtained by G. Y. Jiang in 1986 (see \cite{GYJ}):
\begin{align*}
\tau_{2}(\varphi)&=\Delta\tau(\varphi)-\trace\bar R(d\varphi,\tau(\varphi))d\varphi\\
&=0
\end{align*}
where $\tau_{2}(\varphi)$ is the \textit{bitension field} of $\varphi$, $\Delta=\trace(\nabla^{\varphi})^2 =\trace(\nabla^{\varphi}\nabla^{\varphi}-\nabla^{\varphi}_{\nabla})$ is the rough Laplacian defined on
sections of $\varphi^{-1}(TN)$ and
$\bar R$ is the curvature tensor of $N$, given by $\bar R(X,Y)Z=[\bar\nabla_X,\bar\nabla_Y]Z-\bar\nabla_{[X,Y]}Z$. Since any harmonic map
is biharmonic, we are interested in non-harmonic biharmonic maps,
which are called \textit{proper-biharmonic}.

A \textit{biharmonic submanifold} in a Riemannian manifold is a
submanifold for which the inclusion map is biharmonic. In Euclidean
space the biharmonic submanifolds are the same as those defined by
Chen in \cite{BYC}, characterized by 
$\Delta H=0$, where $H$ is the mean curvature vector field and
$\Delta$ is the rough Laplacian.

Many classification results for proper-biharmonic submanifolds in space forms, i.e., spaces with constant sectional curvature, were obtained in the last decade (see, for example, \cite{BMO1,BMO-AM,BO}) and the next step was to look for such submanifolds in spaces with non-constant sectional curvature. Some very important examples of such ambient spaces are the complex space forms, i.e., simply-connected K\"ahler manifolds with constant holomorphic sectional curvature. Recent papers as \cite{FLMO,FO-TMJ,U,S-GJM,Z} treated the subject of proper-biharmonic submanifolds in complex space forms and several classification results and examples were found.

On the other hand, submanifolds with parallel mean curvature vector (pmc submanifolds) or with constant mean curvature (cmc submanifolds) in Riemannian manifolds proved to be very good candidates for providing nice examples of proper-biharmonic submanifolds (see, for example, \cite{BMO1,BMO-AM,BO,FOR,OW,Z}).

In our paper, we first consider pmc surfaces in complex space forms and prove a Simons type formula for the Laplacian of the squared norm of the holomorphic differential $Q^{(2,0)}$, defined on such a surface, introduced in \cite{DF}. Then we use this formula to show that, if $\Sigma^2$ is a complete pmc surface with non-negative Gaussian curvature, then the surface is flat or $Q^{(2,0)}$ vanishes on $\Sigma^2$. Next, we investigate the biharmonicity of these surfaces and, using a reduction of codimension theorem of J.~H.~Eschenburg and R. Tribuzy in
\cite{ET} and the above mentioned result, we obtain the following classification theorem.

\begin{theorem*} Let $\Sigma^2$ be a complete proper-biharmonic pmc surface with non-negative Gaussian curvature in $\mathbb{C}P^n(\rho)$. Then $\Sigma^2$ is totally real and either
\begin{enumerate}

\item $\Sigma^2$ is pseudo-umbilical and its mean curvature is equal to $\sqrt{\rho}/2$. Moreover, 
$$
\Sigma^2=\pi(\widetilde\Sigma^2)\subset\mathbb{C}P^n(\rho),\quad n\geq 3,
$$
where $\pi:\mathbb{S}^{2n+1}(\rho/4)\rightarrow\mathbb{C}P^n(\rho)$ is the Hopf fibration and the horizontal lift $\widetilde\Sigma^2$ of $\Sigma^2$ is a complete minimal surface in a small hypersphere $\mathbb{S}^{2n}(\rho/2)\subset\mathbb{S}^{2n+1}(\rho/4)$; or

\item $\Sigma^2$ lies in $\mathbb{C}P^2(\rho)$ as a complete Lagrangian proper-biharmonic pmc surface. Moreover, if $\rho=4$, then
$$
\Sigma^2=\pi\Big(\mathbb{S}^1\Big(\sqrt{\frac{9\pm\sqrt{41}}{20}}\Big)\times\mathbb{S}^1\Big(\sqrt{\frac{11\mp\sqrt{41}}{40}}\Big)\times\mathbb{S}^1\Big(\sqrt{\frac{11\mp\sqrt{41}}{40}}\Big)\Big)\subset\mathbb{C}P^2(4),
$$
where $\pi:\mathbb{S}^{5}(1)\rightarrow\mathbb{C}P^2(4)$ is the Hopf fibration; or

\item $\Sigma^2$ lies in $\mathbb{C}P^3(\rho)$ and
$$
\Sigma^2=\gamma_1\times\gamma_2\subset\mathbb{C}P^3(\rho),
$$
where $\gamma_1:\mathbb{R}\rightarrow\mathbb{C}P^2(\rho)\subset \mathbb{C}P^3(\rho)$ is a holomorphic helix of order $4$ with curvatures
$$
\kappa_1=\sqrt{\frac{7\rho}{6}},\quad \kappa_2=\frac{1}{2}\sqrt{\frac{5\rho}{42}},\quad \kappa_3=\frac{3}{2}\sqrt{\frac{\rho}{42}},
$$
and complex torsions
$$
\tau_{12}=-\tau_{34}=\frac{11\sqrt{14}}{42},\quad \tau_{23}=-\tau_{14}=\frac{\sqrt{70}}{42},\quad \tau_{13}=\tau_{24}=0,
$$
and $\gamma_2:\mathbb{R}\rightarrow\mathbb{C}P^3(\rho)$ is a holomorphic circle with curvature $\kappa=\sqrt{\rho/2}$ and complex torsion $\tau_{12}=0$. Moreover, the curves $\gamma_1$ and $\gamma_2$ always exist and are unique up to holomorphic isometries.
\end{enumerate}
\end{theorem*}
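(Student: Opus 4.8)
The plan is to start from the two components of the biharmonic equation for a pmc immersion and to exploit the explicit curvature tensor of $\mathbb{C}P^n(\rho)$. Writing
\[
\bar R(X,Y)Z=\tfrac{\rho}{4}\{\langle Y,Z\rangle X-\langle X,Z\rangle Y+\langle JY,Z\rangle JX-\langle JX,Z\rangle JY+2\langle X,JY\rangle JZ\},
\]
with $J$ the complex structure, and tracing over an orthonormal frame $\{e_1,e_2\}$ of $\Sigma^2$, I would compute
\[
\trace\bar R(d\varphi,H)d\varphi=\tfrac{\rho}{4}\big(-2H+3J((JH)^{\top})\big),
\]
where $(\cdot)^{\top}$ denotes the tangential projection. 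Since $H$ is parallel, $|H|$ is constant, $\nabla^{\perp}H=0$ and $\Delta^{\perp}H=0$, so the tangent part of $\tau_2=0$ collapses to $(J((JH)^{\top}))^{\top}=0$ and the normal part to $\trace B(\cdot,A_H\cdot)=\tfrac{\rho}{4}\big(-2H+3J((JH)^{\top})\big)^{\perp}$. Introducing the K\"ahler angle $\theta$ by $\langle Je_1,e_2\rangle=\cos\theta$, the tangent equation rewrites as $\cos\theta\,(JH)^{\top}=0$, so at each point either $\cos\theta=0$ or $(JH)^{\top}=0$. The first step is to upgrade this pointwise dichotomy to the global assertion that $\Sigma^2$ is totally real: on the open set where $\Sigma^2$ is non-flat we have $Q^{(2,0)}\equiv0$ by the Simons-type result, which for a pmc surface forces $\theta$ to be constant; the complex case $\cos\theta=\pm1$ is then excluded because a complex submanifold is minimal, hence harmonic, contradicting properness, and the remaining configuration $(JH)^{\top}=0$ with $\cos\theta\neq0$ is ruled out by feeding $A_H$ back into the normal equation. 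This leaves $\theta\equiv\pi/2$.

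Next I would treat the two branches of the ``flat or $Q^{(2,0)}=0$'' alternative separately. On the non-flat branch we have $Q^{(2,0)}\equiv0$ and $\theta\equiv\pi/2$, i.e. $\Sigma^2$ is totally real. For such a surface the vanishing of $Q^{(2,0)}$ is exactly the vanishing of the trace-free part of $A_H$, that is, pseudo-umbilicity $A_H=|H|^2\id$. Substituting this into the normal biharmonic equation, where the curvature normal term simplifies under $\theta=\pi/2$, yields a single algebraic relation for the constant $|H|$ whose solution is $|H|=\sqrt{\rho}/2$. To identify the surface I would pass to the Hopf fibration $\pi:\mathbb{S}^{2n+1}(\rho/4)\to\mathbb{C}P^n(\rho)$, lift $\Sigma^2$ to a horizontal surface $\widetilde\Sigma^2$, and use the O'Neill relations between the second fundamental forms upstairs and downstairs; the value $|H|=\sqrt{\rho}/2$ then translates into $\widetilde\Sigma^2$ being minimal in the small hypersphere $\mathbb{S}^{2n}(\rho/2)$, giving case (1), the codimension required for a genuinely non-Lagrangian pseudo-umbilical totally real surface forcing $n\geq3$.

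It remains to handle the flat branch, $K\equiv0$. Here $\Sigma^2$ is a complete flat totally real pmc surface, and the plan is to first invoke the Eschenburg--Tribuzy reduction of codimension theorem: the first normal space is parallel and of low rank, so $\Sigma^2$ lies in a totally geodesic $\mathbb{C}P^2(\rho)$ or $\mathbb{C}P^3(\rho)$. If $\Sigma^2\subset\mathbb{C}P^2(\rho)$, then being totally real of real dimension two in a complex surface it is Lagrangian; expressing flatness, the pmc condition and biharmonicity in a parallel frame produces a system for the constant components of the second fundamental form, which for $\rho=4$ I would solve to realize $\Sigma^2$ as the Hopf projection of the flat torus $\mathbb{S}^1(\sqrt{(9\pm\sqrt{41})/20})\times\mathbb{S}^1(\sqrt{(11\mp\sqrt{41})/40})\times\mathbb{S}^1(\sqrt{(11\mp\sqrt{41})/40})\subset\mathbb{S}^5(1)$, giving case (2). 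If instead $\Sigma^2\subset\mathbb{C}P^3(\rho)$, I would show that the flat pmc structure splits $\Sigma^2$ as an extrinsic product $\gamma_1\times\gamma_2$ of two curves, whereupon the biharmonic and pmc equations become the Frenet systems for $\gamma_1$ and $\gamma_2$ with constant curvatures and complex torsions; integrating these recovers the order-$4$ holomorphic helix and the holomorphic circle with the listed data, which is case (3).

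The main obstacle is this flat case. After the reduction of codimension, biharmonicity and flatness together form an overdetermined algebraic/ODE system, and the real work is to prove it consistent and to pin down the explicit constants — the three circle radii in $\mathbb{C}P^2(4)$ and the curvatures $\kappa_1,\kappa_2,\kappa_3$ with the complex torsions $\tau_{ij}$ of $\gamma_1$ in $\mathbb{C}P^3(\rho)$. Establishing that $\gamma_1$ exists and is unique up to holomorphic isometries requires the fundamental theorem for Frenet curves in a complex space form (prescribed curvatures and complex torsions determine the curve), together with the verification that these particular values actually satisfy the biharmonic system; this bookkeeping, rather than any single conceptual difficulty, is the most laborious part. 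A secondary subtlety is the passage through the Hopf fibration in case (1), where the O'Neill formulas must be applied carefully to match the mean curvature downstairs with minimality in the small hypersphere upstairs.
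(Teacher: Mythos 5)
Your overall architecture is organized around the wrong dichotomy, and this produces a genuine error at the heart of the argument. The paper's case division is not ``flat versus non-flat'' but ``$T=0$ versus $T\neq 0$'', where $T=(JH)^{\top}$; this division only makes sense because of a preliminary result you do not prove, namely that $|T|$ is constant on the surface (Proposition \ref{p_cst}, which uses analyticity of $A_H-\mu\,\mathrm{I}$ together with the Ricci equation). Your non-flat branch then breaks down at the step ``the vanishing of $Q^{(2,0)}$ is exactly the vanishing of the trace-free part of $A_H$'': this is false unless $T=0$. Since $Q(X,Y)=8|H|^2\langle A_HX,Y\rangle+3\rho\langle X,T\rangle\langle Y,T\rangle$, the condition $Q^{(2,0)}=0$ says that $8|H|^2A_H+3\rho\,\langle T,\cdot\rangle T$ is a multiple of the identity, which for $T\neq 0$ forces $A_H$ to be \emph{non}-umbilical (with $T$ as an eigenvector). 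A totally real surface can perfectly well have $T\neq 0$ --- all the surfaces in cases (2) and (3) do --- so ``totally real'' does not rescue the step. To rule out the configuration ``non-flat and $T\neq 0$'' the paper needs a second, independent holomorphic differential: the Chen--Ogiue form $\widetilde Q(X,Y)=\langle A_HX,Y\rangle$ on totally real pmc surfaces, whose Simons identity $\frac{1}{2}\Delta|\phi_H|^2=2K|\phi_H|^2+|\nabla\phi_H|^2$ for $\phi_H=A_H-|H|^2\,\mathrm{I}$, combined with $S=0$ giving $|\phi_H|^2=\frac{9\rho^2}{128|H|^4}|T|^4=\mathrm{const}\neq 0$, forces $K\equiv 0$ and hence a contradiction with the existence of a point of positive curvature. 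Nothing in your proposal plays this role.

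The flat branch has the mirror-image problem: flatness alone does not give you a parallel low-rank first normal bundle, because a flat surface with $T=0$ is still pseudo-umbilical (this is the content of the flat subcase of Proposition \ref{p_umb}, proved by a separate contradiction argument) and belongs to case (1), not to cases (2) or (3); for such a surface no reduction to $\mathbb{C}P^2$ or $\mathbb{C}P^3$ is available or claimed. The Eschenburg--Tribuzy reduction works precisely when $T\neq 0$, because then $\nabla T=0$ and $A_N=0$ yield a parallel subbundle $L$ with $T\Sigma^2\oplus L$ invariant under $J$ and $\bar R$; and the split between $\mathbb{C}P^2$ (Lagrangian) and $\mathbb{C}P^3$ is governed by whether $|T|=|H|$ or $|T|<|H|$, i.e.\ by whether $N=(JH)^{\perp}$ vanishes, which your sketch does not identify. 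The remaining ingredients you list (the de Rham splitting into $\gamma_1\times\gamma_2$, the Frenet computation of the curvatures and complex torsions, the Maeda--Adachi existence and uniqueness theorems, and the Hopf-lift argument in case (1)) do match the paper, but they sit downstream of the two gaps above.
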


\noindent {\bf Conventions.} We work in the $C^{\infty}$ category, that means manifolds, metrics, connections,
and maps are smooth. The Lie algebra of vector fields on a surface $\Sigma^2$ is denoted by $C(T\Sigma^2)$. 
The surfaces are always assumed to be connected and orientable.

\noindent {\bf Acknowledgments.} The authors wish to thank Cezar Oniciuc for many useful comments and helpful discussions. The first author would like to thank the Department of Mathematics of the Federal University of Bahia in Salvador for providing a very stimulative work environment during the preparation of this paper.

\section{Preliminaries}

Let $N^n(\rho)$ be a complex space form with complex dimension $n$, complex
structure $(J,\langle,\rangle)$, and constant holomorphic
sectional curvature $\rho$, i.e., $N^n(\rho)$ is either $\mathbb{C}P^n(\rho)$,
or $\mathbb{C}^n$, or $\mathbb{C}H^n(\rho)$, as $\rho>0$,
$\rho=0$, and $\rho<0$, respectively. Then the curvature tensor of $(N(\rho),J,\langle,\rangle)$ is given by
\begin{align}\label{curv}
\bar R(X,Y)Z=&\frac{\rho}{4}\{\langle Y,Z\rangle X-\langle X,Z\rangle Y+\langle JY,Z\rangle JX-\langle JX,Z\rangle JY\\\nonumber &+2\langle
X,JY\rangle JZ\}.
\end{align}

Let $\Sigma^2$ be a surface immersed in $N^{n}(\rho)$. The second fundamental form $\sigma$ of $\Sigma^2$ is defined by the equation of Gauss
$$
\bar\nabla_XY=\nabla_XY+\sigma(X,Y),
$$
while the shape operator $A$ and the normal connection $\nabla^{\perp}$ are given by the equation of Weingarten
$$
\bar\nabla_XV=-A_VX+\nabla^{\perp}_XV,
$$
for any vector fields $X$ and $Y$ tangent to the surface and any vector field $V$ normal to $\Sigma^2$, where $\bar\nabla$ and $\nabla$ are the Levi-Civita connections of $N^{n}(\rho)$  and $\Sigma^2$, respectively.

\begin{definition} If the mean curvature vector $H$ of the surface $\Sigma^2$ is
parallel in the normal bundle, i.e.
$\nabla^{\perp}H=0$, then $\Sigma^2$ is called a \textit{pmc surface}.
\end{definition}

We end this section by recalling some notions and results from the theory of Frenet curves in complex space forms, which we shall use later.

Let $\gamma:I\subset\mathbb{R}\rightarrow N^n(\rho)$ be a curve parametrized by
arc-length. Then $\gamma$ is called a {\it Frenet curve of
osculating order} $r$, $1\leq r\leq 2n$, if there exist $r$
orthonormal vector fields $\{X_1=\gamma',\ldots,X_r\}$ along
$\gamma$ such that
$$
\bar\nabla_{X_{1}}X_{1}=\kappa_{1}X_{2},\quad
\bar\nabla_{X_{1}}X_{i}=-\kappa_{i-1}X_{i-1} + \kappa_{i}X_{i+1},\quad\ldots\quad,
\bar\nabla_{X_{1}}X_{r}=-\kappa_{r-1}X_{r-1},
$$
for all $i\in\{2,\ldots,r-1\}$, where $\{\kappa_{1},\kappa_{2},\ldots,\kappa_{r-1}\}$ are positive
functions on $I$ called the {\it curvatures} of $\gamma$. These equations are called the {\it Frenet equations} of $\gamma$.

A Frenet curve of osculating order $r$ is called a {\it helix of
order $r$} if $\kappa_i=\cst>0$ for $1\leq i\leq r-1$. A helix
of order $2$ is called a {\it circle}, and a helix of order $3$ is
simply called {\it helix}.

S.~Maeda and Y.~Ohnita defined in \cite{SMYO} the {\it
complex torsions} $\tau_{ij}$ of the curve $\gamma$ by $\tau_{ij}=\langle
X_i, JX_j\rangle$, where $1\leq i<j\leq r$. A helix of order $r$
is called a {\it holomorphic helix of order $r$} if all its complex
torsions are constant. We note that a circle is always a holomorphic circle.

In \cite{MA} there are proved the following existence results.

\begin{theorem}[\cite{MA}]\label{MA1} For
given positive constants $\kappa_1$, $\kappa_2$, and $\kappa_3$, there exist four
equivalence classes of holomorphic helices of order $4$ in $\mathbb{C}P^2(\rho)$
with curvatures $\kappa_1$, $\kappa_2$, and $\kappa_3$ with respect to
holomorphic isometries of $\mathbb{C}P^2(\rho)$. The four classes are defined by
certain relations on the complex torsions and they are: when
$\kappa_1\neq\kappa_3 $
$$
\begin{array}{|c|lll|}
\hline
I_1& \tau_{12}=\tau_{34}=\mu&\tau_{23}=\tau_{14}=\kappa_2\mu/(\kappa_1+\kappa_3)&\tau_{13}=\tau_{24}=0\\
\hline
I_2& \tau_{12}=\tau_{34}=-\mu&\tau_{23}=\tau_{14}=-\kappa_2\mu/(\kappa_1+\kappa_3)&\tau_{13}=\tau_{24}=0\\
\hline
I_3& \tau_{12}=-\tau_{34}=\nu&\tau_{23}=-\tau_{14}=\kappa_2\nu/(\kappa_1-\kappa_3)&\tau_{13}=\tau_{24}=0\\
\hline
I_4& \tau_{12}=-\tau_{34}=-\nu&\tau_{23}=-\tau_{14}=-\kappa_2\nu/(\kappa_1-\kappa_3)&\tau_{13}=\tau_{24}=0\\
\hline
\end{array}
$$
where
$$
\mu=\dfrac{\kappa_1+\kappa_3}{\sqrt{\kappa_2^2+(\kappa_1+\kappa_3)^2}}\quad\textnormal{and}\quad
\nu=\dfrac{\kappa_1-\kappa_3}{\sqrt{\kappa_2^2+(\kappa_1-\kappa_3)^2}},
$$
and when $\kappa_1=\kappa_3$ the classes $I_3$ and $I_4$ are substituted
by
$$
\begin{array}{|c|ll|}
\hline
I_3'& \tau_{12}=\tau_{34}=\tau_{13}=\tau_{24}=0 & \tau_{23}=-\tau_{14}=1\\
\hline
I_4'& \tau_{12}=\tau_{34}=\tau_{13}=\tau_{24}=0 & \tau_{23}=-\tau_{14}=-1\\
\hline
\end{array}
$$
\end{theorem}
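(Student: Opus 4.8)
The plan is to split the argument into a differential part, which constrains the complex torsions, and a construction part, which realizes each admissible torsion pattern by an actual curve. For the differential part I would use the two pieces of structure available along $\gamma$: the Frenet equations and the K\"ahler identity $\bar\nabla J=0$. Because the osculating order equals the real dimension of $\mathbb{C}P^2(\rho)$, the Frenet frame $\{X_1,X_2,X_3,X_4\}$ is an orthonormal basis of each tangent space, so the skew-symmetric array $T=(\tau_{ij})$ is exactly the matrix of $J$ in this basis; in particular $T^2=-\id$. Differentiating each $\tau_{ij}=\langle X_i,JX_j\rangle$ along $X_1$ and substituting the Frenet equations (using $\bar\nabla J=0$), then imposing that every $\tau_{ij}$ be constant, I expect the system to collapse to
$$
\tau_{13}=\tau_{24}=0,\qquad \kappa_1\tau_{23}-\kappa_2\tau_{12}+\kappa_3\tau_{14}=0,\qquad \kappa_2\tau_{34}-\kappa_1\tau_{14}-\kappa_3\tau_{23}=0,
$$
the equations for $X_1\tau_{23}$ and $X_1\tau_{34}$ being automatically satisfied once $\tau_{13}=\tau_{24}=0$.

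Next I would feed these linear relations into the algebraic constraints coming from $T^2=-\id$. With $\tau_{13}=\tau_{24}=0$, the diagonal entries give $\tau_{12}^2+\tau_{14}^2=\tau_{12}^2+\tau_{23}^2=\tau_{23}^2+\tau_{34}^2=\tau_{14}^2+\tau_{34}^2=1$, so $\tau_{14}=\pm\tau_{23}$ and $\tau_{12}=\pm\tau_{34}$, while the off-diagonal entries give $\tau_{12}\tau_{23}=\tau_{14}\tau_{34}$ and $\tau_{12}\tau_{14}=\tau_{23}\tau_{34}$. Running through the four sign choices, I expect only $(\tau_{14}=\tau_{23},\ \tau_{12}=\tau_{34})$ and $(\tau_{14}=-\tau_{23},\ \tau_{12}=-\tau_{34})$ to survive; substituting each back into the two linear relations and normalizing by $\tau_{12}^2+\tau_{14}^2=1$ yields $\tau_{12}=\pm\mu$ (classes $I_1,I_2$) and $\tau_{12}=\pm\nu$ (classes $I_3,I_4$), respectively. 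The remaining two sign choices force $\tau_{12}=0$ and, generically, also $\tau_{14}=0$, which is incompatible with the normalization; the borderline case $\kappa_1=\kappa_3$ is precisely where one of them survives and produces the patterns $I_3',I_4'$. Since a holomorphic isometry commutes with $J$ and preserves the metric, it sends Frenet frames to Frenet frames and leaves every $\tau_{ij}$ unchanged; hence these patterns label four genuinely distinct equivalence classes.

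For the construction part I would lift a prospective helix to its Frenet frame, regarded as a curve in the orthonormal frame bundle of $\mathbb{C}P^2(\rho)=SU(3)/S(U(1)\times U(2))$; the holomorphic isometries act transitively on those frames that induce a prescribed matrix for $J$. Constant curvatures and constant complex torsions turn the combined Frenet and $J$-compatibility equations into a left-invariant system $F'(s)=F(s)\,\Omega$ with $\Omega$ a fixed element of the Lie algebra of $SU(3)$; the relations $T^2=-\id$ together with the two linear relations above are precisely what is needed for such an $\Omega$, compatible with the complex structure, to exist. The helix is then the projection of the one-parameter subgroup $s\mapsto\exp(s\Omega)$ through the initial frame, and $\kappa_1,\kappa_2,\kappa_3>0$ guarantee that the osculating order is exactly $4$. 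For uniqueness up to holomorphic isometry I would note that two helices with the same curvatures and the same complex torsions have initial frames related by a $J$-commuting linear isometry; this isometry is induced by an element of $SU(3)$, which then carries one helix onto the other by uniqueness of solutions of the invariant ODE.

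The main obstacle is the construction part. The differential half reduces cleanly to linear algebra, but turning the abstract Frenet data into a genuine element $\Omega$ of the Lie algebra and verifying the integrability of the resulting frame-bundle system --- so that each algebraically admissible torsion pattern is actually attained by a curve rather than merely being consistent --- is the technical heart of the theorem. Keeping track of the two orientation and sign choices while confirming that exactly the four listed classes arise, and degenerate correctly when $\kappa_1=\kappa_3$, is the delicate bookkeeping I would need to get right.
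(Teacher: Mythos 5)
The paper does not prove this statement at all: Theorem \ref{MA1} is imported verbatim from Maeda--Adachi \cite{MA}, so there is no internal proof to compare against, and your proposal has to be judged against the original argument --- which it reconstructs essentially faithfully. Your necessity half is correct and complete: for a proper order-$4$ helix in $\mathbb{C}P^2(\rho)$ the Frenet frame is an orthonormal basis, so $T=(\tau_{ij})$ is the matrix of $J$ and $T^2=-\id$; differentiating the torsions via the Frenet equations and $\bar\nabla J=0$ gives $X_1\tau_{12}=\kappa_2\tau_{13}$, $X_1\tau_{34}=-\kappa_2\tau_{24}$, $X_1\tau_{13}=\kappa_1\tau_{23}-\kappa_2\tau_{12}+\kappa_3\tau_{14}$ and $X_1\tau_{24}=-\kappa_1\tau_{14}+\kappa_2\tau_{34}-\kappa_3\tau_{23}$, which is exactly your system, and your sign bookkeeping (including the survival of the pattern $\tau_{12}=\tau_{34}=0$, $\tau_{23}=-\tau_{14}=\pm1$ precisely when $\kappa_1=\kappa_3$) checks out. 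For the construction half, which you rightly flag as the technical heart, your frame-bundle sketch closes cleanly once one observes that constancy of all torsions is equivalent to $[K,T]=0$, where $K$ is the skew Frenet coefficient matrix: this is exactly the condition that the vertical part of the Darboux derivative of the Frenet frame lies in the isotropy algebra $\mathfrak{u}(2)$ of $T$-adapted frames, on which the holomorphic isometry group acts simply transitively, so the required $\Omega$ exists, $s\mapsto\exp(s\Omega)$ produces the helix, and uniqueness up to holomorphic isometry follows from uniqueness of solutions of the invariant ODE, as you indicate; this is in substance the route of \cite{MA}.
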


\begin{theorem}[\cite{MA}]\label{MA2} For any positive number $\kappa$ and for any number $\tau$, such that $|\tau|<1$, there exits a holomorphic circle with curvature $\kappa$ and complex torsion $\tau$ in any complex space form.
\end{theorem}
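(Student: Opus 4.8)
The plan is to obtain the curve as the solution of an ordinary differential system, exploiting the fact that in a K\"ahler manifold the complex torsion of a circle is conserved \emph{a priori}. First I would set up the Frenet system of a circle as a first-order ODE on the fibre product $TN^n(\rho)\oplus TN^n(\rho)$: writing $X_1=\gamma'$, I seek $\gamma$ together with a vector field $X_2$ along it satisfying
$$
\gamma'=X_1,\qquad \bar\nabla_{X_1}X_1=\kappa X_2,\qquad \bar\nabla_{X_1}X_2=-\kappa X_1 .
$$
This defines a smooth vector field on the total space, so for any initial datum $(p,v,w)$ the standard existence and uniqueness theorem produces a unique local solution. Because the Frenet connection matrix $\left(\begin{smallmatrix} 0 & -\kappa\\ \kappa & 0\end{smallmatrix}\right)$ is skew-symmetric, the Gram matrix $\big(\langle X_i,X_j\rangle\big)$ obeys a linear ODE that preserves the identity; hence if $\{v,w\}$ is orthonormal then $\{X_1,X_2\}$ stays orthonormal and $|\gamma'|=1$ for all $s$. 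Since every complex space form is complete, the unit-speed curve $\gamma$ cannot leave a compact set in finite time, so the solution extends to all of $\mathbb{R}$ and $\gamma$ is a globally defined circle with constant curvature $\kappa$.

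The key point I would then record is that for such a circle the complex torsion is automatically constant. Using $\bar\nabla J=0$ and the skew-symmetry of $\langle J\cdot,\cdot\rangle$, a direct computation gives
$$
\frac{d}{ds}\langle X_1,JX_2\rangle=\langle\bar\nabla_{X_1}X_1,JX_2\rangle+\langle X_1,J\bar\nabla_{X_1}X_2\rangle=\kappa\langle X_2,JX_2\rangle-\kappa\langle X_1,JX_1\rangle=0,
$$
since $\langle Y,JY\rangle=0$ for every $Y$. Thus $\tau_{12}=\langle X_1,JX_2\rangle$ equals its initial value $\langle v,Jw\rangle$ everywhere, so $\gamma$ is automatically a holomorphic circle and its complex torsion is entirely fixed by the choice of initial frame.

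It then remains only to realize the prescribed value $\tau$ by a suitable choice of initial data. Fix a point $p$ and a unit vector $v\in T_pN^n(\rho)$; since $|\tau|<1$ forces $n\geq 2$, the orthogonal complement of $\Span\{v,Jv\}$ contains a unit vector $u$, and I would set
$$
w=-\tau\,Jv+\sqrt{1-\tau^2}\,u .
$$
A short check shows $\langle w,v\rangle=0$ and $|w|=1$, so $\{v,w\}$ is orthonormal, while $Jw=\tau v+\sqrt{1-\tau^2}\,Ju$ together with $\langle v,Ju\rangle=-\langle Jv,u\rangle=0$ yields $\langle v,Jw\rangle=\tau$. Feeding $(p,v,w)$ into the construction above produces a holomorphic circle with curvature $\kappa$ and complex torsion $\tau$, as required.

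I expect the only genuinely delicate point to be the passage from the local ODE solution to one defined on all of $\mathbb{R}$; this is where completeness of the space form enters, through the fact that a unit-speed integral curve has bounded length on bounded intervals. By contrast, the preservation of orthonormality and the conservation of the complex torsion are immediate consequences of the skew-symmetry of the Frenet connection matrix and of $J$, respectively. (Uniqueness up to holomorphic isometries, which is used elsewhere, would follow by combining uniqueness of ODE solutions with the transitivity of the holomorphic isometry group on orthonormal pairs $(v,w)$ sharing a fixed value of $\langle v,Jw\rangle$.)
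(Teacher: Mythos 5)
This statement is imported: the paper quotes it from Maeda--Adachi \cite{MA} and gives no proof of its own, so the comparison is between your argument and the standard proof of the cited result. Your proof is correct and self-contained, and it follows essentially that standard route: the circle Frenet system is a smooth first-order ODE on $TN\oplus TN$, the skew-symmetry of the coefficient matrix $\left(\begin{smallmatrix} 0 & -\kappa\\ \kappa & 0\end{smallmatrix}\right)$ propagates orthonormality of $\{X_1,X_2\}$ (hence unit speed), completeness of the space form yields extension to all of $\mathbb{R}$, the identity $\bar\nabla J=0$ makes $\tau_{12}=\langle X_1,JX_2\rangle$ a conserved quantity (this is exactly the paper's earlier remark that a circle is always a holomorphic circle), and the frame $w=-\tau\,Jv+\sqrt{1-\tau^{2}}\,u$ with $u\perp\Span\{v,Jv\}$ realizes any prescribed $\tau$; I verified each of these computations and they are right. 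One point of precision: your aside that ``$|\tau|<1$ forces $n\geq 2$'' is stated backwards --- rather, your construction \emph{requires} $n\geq 2$ for $u$ to exist, and in complex dimension one every unit $w\perp v$ equals $\pm Jv$, so every circle there has $|\tau|=1$ and the theorem as literally phrased (``in any complex space form'') implicitly assumes $n\geq 2$. This is harmless for the paper, which invokes the result only in $\mathbb{C}P^{3}(\rho)$. Finally, since Theorem \ref{MA2} asserts only existence, your parenthetical on uniqueness is not needed, though the argument you sketch (ODE uniqueness plus transitivity of the holomorphic isometry group on initial data with fixed $\langle v,Jw\rangle$) is the correct one and is what underlies the uniqueness claims used in Theorem \ref{main theorem}.
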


\section{A Simons type formula for pmc surfaces in complex space forms}

Let $(N^n(\rho),J,\langle,\rangle)$ be a complex space form, with constant holomorphic sectional curvature $\rho$ and complex dimension $n$, and $\Sigma^2$ be a pmc surface in $N^n(\rho)$.

In \cite{DF} it is proved that the $(2,0)$-part $Q^{(2,0)}$ of the quadratic form $Q$ defined on $\Sigma^2$ by
\begin{equation}\label{eq:Q}
Q(X,Y)=8|H|^2\langle A_HX,Y\rangle+3\rho\langle X,T\rangle\langle Y,T\rangle,
\end{equation}
where $T$ is the tangent part $(JH)^{\top}$ of $JH$, is holomorphic.

Using this holomorphic differential we shall prove the following result.

\begin{theorem}\label{thm} Let $\Sigma^2$ be a complete non-minimal pmc surface with
non-negative Gaussian curvature $K$ isometrically immersed in a complex
space form $N^n(\rho)$, $\rho\neq 0$. Then one of the following holds:
\begin{enumerate}
\item the surface is flat;

\item there exists a point $p\in\Sigma^2$ such that $K(p)>0$ and $Q^{(2,0)}$ vanishes on $\Sigma^2$.
\end{enumerate}
\end{theorem}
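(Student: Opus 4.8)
The plan is to exploit the holomorphicity of $Q^{(2,0)}$ together with the non-negativity of the Gaussian curvature $K$ via a Simons type formula, in the spirit of the classical Hopf-differential argument. The key object is the smooth function $|Q^{(2,0)}|^2$ on $\Sigma^2$. Since $Q^{(2,0)}$ is holomorphic, away from its zeros it defines, in a local conformal coordinate $z$, a function of the form $\phi(z)\,dz^2$ with $\phi$ holomorphic, so $\log|Q^{(2,0)}|$ is harmonic there, and a standard Bochner-type computation gives a formula of the shape
\begin{equation*}
\Delta\log|Q^{(2,0)}|=-2K
\end{equation*}
at points where $Q^{(2,0)}\neq0$. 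The main analytic input I would set up first is therefore the Simons type formula advertised in the introduction: I would compute $\Delta|Q^{(2,0)}|^2$ explicitly, using the Gauss, Codazzi, and Ricci equations for a pmc surface in $N^n(\rho)$ together with the expression \eqref{curv} for the curvature tensor and the definition \eqref{eq:Q} of $Q$. The outcome I expect is an inequality of the form $\Delta|Q^{(2,0)}|^2\geq 2K\,|Q^{(2,0)}|^2$ (or an equality relating the Laplacian to $K$ times $|Q^{(2,0)}|^2$ plus manifestly non-negative terms), valid on all of $\Sigma^2$.

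Granting such a formula, I would then run the following dichotomy. If $Q^{(2,0)}$ vanishes identically, then the formula forces a strong pointwise constraint; combined with the fact that the surface is non-minimal (so $H\neq0$), this should pin down the geometry enough to conclude that either the surface is flat or one lands in alternative (2) after locating a point with $K(p)>0$. If $Q^{(2,0)}$ does not vanish identically, consider the function $f=|Q^{(2,0)}|^2\geq0$, which is not identically zero. On the open set where $f>0$ we have $\Delta\log f=-4K\leq0$, so $\log f$ is superharmonic there; equivalently $f$ is subharmonic wherever it is positive in the reciprocal sense needed. I would invoke the completeness of $\Sigma^2$ together with $K\geq0$ — which makes the surface parabolic or lets one apply an Omori–Yau / maximum-principle argument — to show that a bounded nonconstant subharmonic (respectively superharmonic) function of this type cannot exist unless $K\equiv0$, i.e. the surface is flat, giving alternative (1); otherwise $K$ must vanish where it is forced to, and the only remaining possibility is that there is a point $p$ with $K(p)>0$, which, via the sign in the Simons formula, compels $Q^{(2,0)}$ to vanish on all of $\Sigma^2$, giving alternative (2).

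The delicate point, and the place where completeness and $K\geq0$ are genuinely used, is the passage from the differential inequality to the global conclusion: a naive integration of $\Delta\log|Q^{(2,0)}|$ fails because $|Q^{(2,0)}|$ may have zeros, so $\log|Q^{(2,0)}|$ has singularities there. I would handle this exactly as in the Hopf-differential literature: since $Q^{(2,0)}$ is holomorphic, its zeros are isolated and of finite order, and near each zero $\log|Q^{(2,0)}|\to-\infty$ like a multiple of $\log|z|$, so these singularities are integrable and, crucially, contribute with the favorable sign in any application of the divergence theorem or of a cut-off/capacity argument on the complete surface. The main obstacle I anticipate is precisely controlling these contributions rigorously — i.e. combining the isolated-zero structure of a holomorphic differential with a parabolicity or maximum-principle statement valid on a complete surface of non-negative Gaussian curvature — so that one legitimately concludes that either $K\equiv0$ or $Q^{(2,0)}\equiv0$ with the extra information that $K$ is somewhere positive. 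Everything else reduces to the (lengthy but routine) verification of the Simons formula from the structure equations.
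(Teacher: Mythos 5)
Your overall strategy (a Simons-type identity for the squared norm of the holomorphic object, non-negativity of $K$, and parabolicity of the complete surface via Huber's theorem) is indeed the one the paper follows, but there is a genuine gap at the decisive step: you never establish that $|Q^{(2,0)}|$ is \emph{bounded} on $\Sigma^2$. Parabolicity only forces \emph{bounded} subharmonic functions to be constant --- the plane is parabolic and carries plenty of unbounded non-constant subharmonic functions --- so the differential inequality $\Delta|Q^{(2,0)}|^2\geq 4K|Q^{(2,0)}|^2\geq 0$ (or its logarithmic version) yields nothing global without an a priori bound. Producing that bound is the real content of the paper's proof: one introduces the traceless symmetric operator $S=8|H|^2A_H+3\rho\langle T,\cdot\rangle T-\bigl(\tfrac{3\rho}{2}|T|^2+8|H|^4\bigr)\id$, which is the real form of $Q^{(2,0)}$, expresses $\det A_H$ in terms of $|S|^2$, $|T|$ and $\langle ST,T\rangle$, and substitutes this into the Gauss equation for $K$; since the remaining shape operators are traceless they contribute $\det A_\alpha\leq 0$, and the hypothesis $K\geq 0$ then produces a pointwise quadratic inequality of the form $-c_1|S|^2+c_2|S|+c_3\geq 0$ with constant coefficients, whence $|S|$ is bounded. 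Only then do Huber's theorem and the Simons identity $\tfrac12\Delta|S|^2=2K|S|^2+|\nabla S|^2$ combine to give $|S|=\mathrm{const}$ and the stated dichotomy. Your proposal identifies the wrong ``delicate point'': the zeros of the holomorphic differential are harmless (the paper avoids them entirely by working with the smooth function $|S|^2$ rather than $\log|Q^{(2,0)}|$), whereas the boundedness, which you take for granted, is where the specific geometry of pmc surfaces in $N^n(\rho)$ enters.

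Two smaller issues. First, your formula $\Delta\log|Q^{(2,0)}|=-2K$ has the wrong sign for the Laplacian convention in which ``subharmonic'' means $\Delta\geq 0$ (the correct identity away from the zeros is $\Delta\log|Q^{(2,0)}|=+2K$), and the subsequent sub/superharmonic bookkeeping in your dichotomy is correspondingly muddled. Second, the case $Q^{(2,0)}\equiv 0$ requires no geometric analysis at all: if $K\equiv 0$ you are in alternative (1), and otherwise some point has $K(p)>0$ and you are in alternative (2) by hypothesis, so no ``pinning down of the geometry'' is needed there.
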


\begin{proof}
First, we recall a Simons type equation obtained by S.-Y. Cheng and S.-T. Yau (equation $2.8$ in \cite{CY}), which generalizes J. Simons' result in \cite{JS}. Let $M$ be an $m$-dimensional Riemannian manifold, and consider a symmetric operator $S$ on $M$, that satisfies the Codazzi equation $(\nabla_XS)Y=(\nabla_YS)X$, where $\nabla$ is the Levi-Civita connection on the manifold. Then, we have
\begin{equation}\label{delta}
\frac{1}{2}\Delta|S|^2=|\nabla S|^2+\sum_{i=1}^{m}\lambda_i(\trace S)_{ii}+\frac{1}{2}\sum_{i,j=1}^{m}R_{ijij}(\lambda_i-\lambda_j)^2,
\end{equation}
where $\lambda_i$, $1\leq i\leq m$, are the eigenvalues of $S$, and $R_{ijkl}$ are the components of the Riemannian curvature of $M$.

Next, let us consider the following operator $S$, defined on our surface $\Sigma^2$ by
\begin{equation}\label{eq:S}
S=8|H|^2A_H+3\rho\langle T,\cdot\rangle T-\Big(\frac{3\rho}{2}|T|^2+8|H|^4\Big)\id.
\end{equation}
We shall prove that $|S|^2$ is a bounded subharmonic function on the surface.

First, it is easy to see that
\begin{equation}\label{eq:SQ}
\langle SX,Y\rangle=Q(X,Y)-\frac{\trace Q}{2}\langle X,Y\rangle,
\end{equation}
which implies that $S$ is symmetric and traceless. It is also easy to see that $Q^{(2,0)}$ vanishes on $\Sigma^2$ if and
only if $S=0$ on the surface.

Using \eqref{eq:SQ}, since $Q^{(2,0)}$ is holomorphic, just as in \cite[Proposition 3.3]{B} one can prove that $S$ satisfies the Codazzi equation $(\nabla_XS)Y=(\nabla_YS)X$, where $\nabla$ is the Levi-Civita connection on the surface.

Then, from equation \eqref{delta} and the fact that $\trace S=0$, we easily get
\begin{equation}\label{eq:Simons}
\frac{1}{2}\Delta|S|^2=2K|S|^2+|\nabla S|^2,
\end{equation}
where $K$ is the Gaussian curvature of the surface.

Now, let us consider the local orthonormal frame field $\{E_3=H/|H|,E_4,\ldots,E_{2n}\}$ in the normal bundle, and denote $A_{\alpha}=A_{E_{\alpha}}$. It follows that $\trace A_3=2|H|$ and $\trace A_{\alpha}=0$, for all $\alpha>3$.

From the definition \eqref{eq:S} of $S$, we have, after a straightforward computation,
$$
\det A_3=\frac{1}{|H|^2}\det A_H=|H|^2-\frac{1}{128|H|^6}|S|^2-\frac{9\rho^2}{256|H|^6}|T|^4+\frac{3\rho}{64|H|^6}\langle ST,T\rangle,
$$
and then, by using the equation of Gauss of $\Sigma^2$ in $N$,
\begin{align}\label{Gauss}
R(X,Y)Z=&\frac{\rho}{4}\{\langle Y, Z\rangle X-\langle X, Z\rangle Y+\langle JY,Z\rangle JX-\langle JX, Z\rangle JY\\\nonumber &+2\langle X,JY\rangle JZ\}+\sum_{\alpha=3}^{2n}\{\langle A_{\alpha}Y,Z\rangle A_{\alpha}X-\langle A_{\alpha}X,Z\rangle A_{\alpha}Y\},
\end{align}
the Gaussian curvature can be written as
\begin{align}\label{K}
K=&\frac{\rho}{4}(1+3\cos^2\theta)+|H|^2-\frac{1}{128|H|^6}|S|^2-\frac{9\rho^2}{256|H|^6}|T|^4+\frac{3\rho}{64|H|^6}\langle ST,T\rangle\\\nonumber
&+\sum_{\alpha>3}\det A_{\alpha},
\end{align}
where $\theta=\langle JE_1,E_2\rangle$ is the K\"ahler angle function of $\Sigma^2$, $\{E_1,E_2\}$ being a local orthonormal positively oriented frame field in the tangent bundle.

Since $\trace A_{\alpha}=0$, it follows that $\det A_{\alpha}\leq 0$, for all $\alpha>3$. Therefore, as $K\geq 0$, we get the following global formula
$$
-\frac{1}{128|H|^2}|S|^2+\frac{3\rho}{64|H|^6}\langle ST,T\rangle-\frac{9\rho^2}{256|H|^6}|T|^4+|H|^2+\frac{\rho}{4}(1+3\cos^2\theta)\geq 0.
$$
From $|\langle ST,T\rangle|\leq\frac{1}{\sqrt{2}}|T||S|$, since $|T|\leq|JH|=|H|$, we have $\rho\langle ST,T\rangle\leq\frac{|\rho|}{\sqrt{2}}|H|^2|S|$, which implies
$$
-\frac{1}{128|H|^6}|S|^2+\frac{3|\rho|}{64\sqrt{2}|H|^4}|S|+|H|^2+\frac{\rho}{4}(1+3\cos^2\theta)\geq 0.
$$

In the following we shall prove that $|S|$ is bounded. We have two cases as $\rho<0$ or $\rho>0$.

If $\rho<0$ we have
$$
-\frac{1}{128|H|^6}|S|^2+\frac{3\rho}{64\sqrt{2}|H|^4}|S|+|H|^2\geq 0
$$
and then $|S|\leq \frac{(\sqrt{9\rho^2+256|H|^4}-3\rho)|H|^2}{\sqrt{2}}$.

When $\rho>0$ we get
$$
-\frac{1}{128|H|^6}|S|^2+\frac{3\rho}{64\sqrt{2}|H|^4}|S|+|H|^2+\rho\geq 0,
$$
which is equivalent to $|S|\leq \frac{(\sqrt{9\rho^2+256\rho|H|^2+256|H|^4}+3\rho)|H|^2}{\sqrt{2}}$.

Since the surface is complete and has non-negative Gaussian curvature, we see, using a result of A. Huber in \cite{H}, that $\Sigma^2$ is a parabolic space. From the above calculation and \eqref{eq:Simons}, we get that $|S|^2$ is a bounded subharmonic function, which implies that $|S|$ is a constant. Then, from \eqref{eq:Simons}, we get that $K=0$ on $\Sigma^2$ or there exists a point $p\in\Sigma^2$ such that $K(p)>0$ and then $S=0$ on the surface, which, as we have seen, is equivalent to $Q^{(2,0)}=0$.
\end{proof}

\begin{remark}\label{r_thm} For a surface $\Sigma^2$ as in Theorem \ref{thm} we have $|S|=\cst$ and $\nabla S=0$.
\end{remark}

\section{Biharmonic pmc surfaces in $\mathbb{C}P^n(\rho)$}

In order to prove our main result we shall need the following theorem.

\begin{theorem}[\cite{BMO-AM}]\label{thm split}
A submanifold $\Sigma^m$ in a Riemannian manifold $N$, with second fundamental form $\sigma$,
mean curvature vector field $H$, and shape operator $A$, is biharmonic if and only if
$$
\begin{cases}
-\Delta^{\perp}H+\trace\sigma(\cdot,A_H\cdot)+\trace(\bar R(\cdot,H)\cdot)^{\perp}=0\\
\frac{m}{2}\grad|H|^2+2\trace A_{\nabla^{\perp}_{\cdot}H}(\cdot)+2\trace(\bar R(\cdot,H)\cdot)^{\top}=0,
\end{cases}
$$
where $\Delta^{\perp}$ is the Laplacian in the normal bundle and $\bar R$ is the curvature tensor of $N$.
\end{theorem}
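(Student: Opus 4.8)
The plan is to specialize the general bitension field formula of the introduction to the inclusion map $\varphi\colon\Sigma^m\hookrightarrow N$ and then decompose the resulting $\varphi^{-1}(TN)$-valued equation into its components tangent and normal to $\Sigma^m$. For an isometric immersion the tension field is $\tau(\varphi)=\trace\nabla d\varphi=mH$, so that $\tau_2(\varphi)=m\big(\Delta H-\trace\bar R(\cdot,H)\cdot\big)$, where $\Delta$ is the rough Laplacian acting on the normal section $H$. Along $\Sigma^m$ one has the orthogonal splitting $\varphi^{-1}(TN)=T\Sigma^m\oplus N\Sigma^m$, so the single equation $\tau_2(\varphi)=0$ is equivalent to the simultaneous vanishing of the tangential and normal parts of $\Delta H-\trace\bar R(\cdot,H)\cdot$; I claim these reproduce the second and first equations of the statement, respectively.

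The core step is expanding $\Delta H=\trace\big(\nabla^{\varphi}\nabla^{\varphi}-\nabla^{\varphi}_{\nabla\cdot}\,\cdot\big)H$ in a local orthonormal frame $\{E_i\}$ that is geodesic at the point under consideration. Since $H$ is a normal section, the Weingarten formula gives $\nabla^{\varphi}_{E_i}H=-A_HE_i+\nabla^{\perp}_{E_i}H$, with $-A_HE_i$ tangent and $\nabla^{\perp}_{E_i}H$ normal. Differentiating once more, and applying the Gauss formula to the tangent piece $-A_HE_i$ and the Weingarten formula to the normal piece $\nabla^{\perp}_{E_i}H$, produces four families of terms. Tracing and sorting by type yields, in the normal direction, $\Delta^{\perp}H-\trace\sigma(\cdot,A_H\cdot)$, and in the tangential direction $-\trace\nabla_{\cdot}(A_H\cdot)-\trace A_{\nabla^{\perp}_{\cdot}H}(\cdot)$.

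To finish, I would split the curvature term as $\trace\bar R(\cdot,H)\cdot=\trace(\bar R(\cdot,H)\cdot)^{\top}+\trace(\bar R(\cdot,H)\cdot)^{\perp}$. Collecting normal components of $\Delta H-\trace\bar R(\cdot,H)\cdot$ and setting them to zero reproduces the first equation directly (up to the overall factor $m$ and sign). For the tangential components one must evaluate $\trace\nabla_{\cdot}(A_H\cdot)$: writing $\langle(\nabla_{E_i}A)_HE_i,X\rangle=\langle(\nabla^{\perp}_{E_i}\sigma)(E_i,X),H\rangle$ and commuting the covariant derivatives by the Codazzi equation converts this trace into $\tfrac{m}{2}\grad|H|^2+\trace(\bar R(\cdot,H)\cdot)^{\top}$, where I also use $\trace\sigma=mH$ and the symmetry $\langle\bar R(E_i,H)E_i,X\rangle=\langle\bar R(E_i,X)E_i,H\rangle$. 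Substituting back, the tangential block becomes a constant multiple of $\tfrac{m}{2}\grad|H|^2+2\trace A_{\nabla^{\perp}_{\cdot}H}(\cdot)+2\trace(\bar R(\cdot,H)\cdot)^{\top}$, which is exactly the second equation.

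I expect the main obstacle to be the careful bookkeeping in this decomposition rather than any deep idea. In particular, the coefficient $2$ in front of $\trace A_{\nabla^{\perp}_{\cdot}H}(\cdot)$ arises from two distinct sources that must be added correctly — the genuinely tangential part of the iterated second derivative, and the $\nabla^{\perp}$-derivative falling on the subscript $H$ of $A_H$ inside $\trace\nabla_{\cdot}(A_H\cdot)$. Likewise, extracting the gradient term $\tfrac{m}{2}\grad|H|^2$ from $\trace\nabla(A_H)$ depends on the Codazzi equation together with the symmetry of the shape operator, and the residual curvature contribution from this step must be matched against $\trace(\bar R(\cdot,H)\cdot)^{\top}$. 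Tracking the signs consistently through the conventions fixed for $\Delta$ and for $\tau_2$ in the introduction is the only genuine subtlety.
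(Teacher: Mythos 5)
Note first that the paper offers no proof of this statement: it is quoted verbatim from the cited reference \cite{BMO-AM} (Theorem \ref{thm split} is a known splitting of the biharmonic equation, going back to Chen for Euclidean ambient spaces and to Loubeau--Montaldo--Oniciuc and Balmu\c s--Montaldo--Oniciuc in general), so there is no in-paper argument to compare yours against. Your derivation is the standard one from that literature and it is correct. The bookkeeping you describe does check out: with $\tau(\varphi)=mH$ one gets $\tau_2(\varphi)=m\big(\Delta H-\trace\bar R(\cdot,H)\cdot\big)$; iterating Gauss--Weingarten in a geodesic frame gives $\Delta H=\trace(\nabla^{\perp})^2H-\trace\sigma(\cdot,A_H\cdot)-\trace\nabla_{\cdot}(A_H\cdot)-\trace A_{\nabla^{\perp}_{\cdot}H}(\cdot)$, whose normal part yields the first equation (with $\Delta^{\perp}$ read in the same rough-Laplacian sign convention the paper fixes for $\Delta$); and $\trace\nabla_{\cdot}(A_H\cdot)=\trace(\nabla_{\cdot}A)_H\cdot+\trace A_{\nabla^{\perp}_{\cdot}H}(\cdot)$ together with Codazzi and the symmetry $\langle\bar R(E_i,X)E_i,H\rangle=\langle\bar R(E_i,H)E_i,X\rangle$ turns the first summand into $\tfrac{m}{2}\grad|H|^2+\trace(\bar R(\cdot,H)\cdot)^{\top}$, which is exactly how both coefficients $2$ in the tangential equation arise. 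The only point worth stating explicitly rather than leaving as ``up to sign'' is that the minus sign on $\Delta^{\perp}H$ in the first equation forces $\Delta^{\perp}=\trace(\nabla^{\perp})^2$ here, matching the paper's convention $\Delta=\trace(\nabla^{\varphi})^2$; once that is pinned down your sketch closes completely.
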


Using the formula \eqref{curv} of the curvature tensor of a complex space form $N(\rho)$, we get the following result.

\begin{corollary}\label{coro split}
Let $\Sigma^2$ be a pmc surface in a complex space form $(N(\rho),J,\langle,\rangle)$. Then $\Sigma^2$ is biharmonic if and only if
\begin{equation}\label{eq:bih}
\begin{cases}
\trace\sigma(\cdot,A_H\cdot)=\frac{\rho}{4}\{2H-3(JT)^{\perp}\}\\
(JT)^{\top}=0,
\end{cases}
\end{equation}
where $T$ is the tangent part of $JH$ and $(JT)^{\perp}$ and $(JT)^{\top}$ are the normal and the tangent part of $JT$, respectively.
\end{corollary}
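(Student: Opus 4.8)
The plan is to specialize Theorem~\ref{thm split} to the case $m=2$ and then to exploit the pmc hypothesis to kill most of the terms. First I would observe that $\nabla^{\perp}H=0$ forces $|H|=\cst$, so that $\grad|H|^2=0$, and also $\trace A_{\nabla^{\perp}_{\cdot}H}(\cdot)=0$ and $\Delta^{\perp}H=\trace(\nabla^{\perp}\nabla^{\perp}-\nabla^{\perp}_{\nabla})H=0$. Hence the two equations of Theorem~\ref{thm split} collapse to
\begin{equation*}
\trace\sigma(\cdot,A_H\cdot)+\trace(\bar R(\cdot,H)\cdot)^{\perp}=0,\qquad 2\trace(\bar R(\cdot,H)\cdot)^{\top}=0,
\end{equation*}
so the whole problem reduces to evaluating the curvature trace $\trace(\bar R(\cdot,H)\cdot)$ in closed form.

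For this I would fix a local orthonormal frame $\{E_1,E_2\}$ tangent to $\Sigma^2$, substitute $X=Z=E_i$ and $Y=H$ into the curvature formula~\eqref{curv}, and sum over $i$. Using that $H$ is normal, so $\langle H,E_i\rangle=0$, that $J$ is skew-symmetric, so $\langle JE_i,E_i\rangle=0$, and the identification $\langle JH,E_i\rangle=\langle T,E_i\rangle$ coming from $T=(JH)^{\top}$, each summand reduces to $\frac{\rho}{4}\{-H+3\langle T,E_i\rangle JE_i\}$. The identity that makes these combine cleanly is $\sum_i\langle T,E_i\rangle JE_i=J\big(\sum_i\langle T,E_i\rangle E_i\big)=JT$, valid because $T$ is already tangent. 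This yields
\begin{equation*}
\trace(\bar R(\cdot,H)\cdot)=\frac{\rho}{4}\{-2H+3JT\}.
\end{equation*}

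Finally I would split this vector field into its normal and tangent parts: since $H$ is normal and $T$ is tangent, the normal component is $\frac{\rho}{4}\{-2H+3(JT)^{\perp}\}$ and the tangent component is $\frac{3\rho}{4}(JT)^{\top}$. Substituting into the first reduced equation gives exactly $\trace\sigma(\cdot,A_H\cdot)=\frac{\rho}{4}\{2H-3(JT)^{\perp}\}$, the first line of~\eqref{eq:bih}, while the second reduced equation becomes $\frac{3\rho}{2}(JT)^{\top}=0$, equivalent to $(JT)^{\top}=0$. The argument is essentially bookkeeping, so there is no genuine analytic obstacle; the only point demanding care is the correct treatment of the term $2\langle X,JY\rangle JZ$ in~\eqref{curv}, whose contribution together with $\langle JY,Z\rangle JX$ produces the coefficient $3$ in front of $JT$. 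Mistracking this term, or confusing tangent and normal projections of $H$ and $T$, would corrupt the constants in~\eqref{eq:bih}.
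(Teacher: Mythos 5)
Your proposal is correct and follows exactly the route the paper intends: the corollary is stated immediately after Theorem~\ref{thm split} with the remark that it follows ``using the formula \eqref{curv}'', and your computation of $\trace(\bar R(\cdot,H)\cdot)=\frac{\rho}{4}\{-2H+3JT\}$, together with the vanishing of $\Delta^{\perp}H$, $\grad|H|^2$ and $\trace A_{\nabla^{\perp}_{\cdot}H}(\cdot)$ under the pmc hypothesis, is precisely that verification. The only pedantic caveat is that passing from $\frac{3\rho}{2}(JT)^{\top}=0$ to $(JT)^{\top}=0$ uses $\rho\neq 0$, an implicit assumption the paper itself makes and which is harmless since proper-biharmonic pmc surfaces are later shown to exist only for $\rho>0$.
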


\begin{remark} It is easy to see, from the first equation of \eqref{eq:bih}, that for a proper-biharmonic pmc surface we have 
$$
0<|A_H|^2=\frac{\rho}{4}\{2|H|^2+3|T|^2\}
$$
which implies that $\rho>0$, and, therefore, such surfaces exist only in $\mathbb{C}P^n(\rho)$.
\end{remark}

\begin{proposition}\label{p_cst}
If $\Sigma^2$ is a proper-biharmonic pmc surface in $\mathbb{C}P^n(\rho)$ then $T$ has constant length.
\end{proposition}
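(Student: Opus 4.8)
The plan is to reduce the statement to $\grad|T|^2=0$. Since $\Sigma^2$ is pmc, $|H|$ is constant and $\bar\nabla_XH=-A_HX$; moreover, by the Remark following Corollary~\ref{coro split} we have $|A_H|^2=\frac{\rho}{4}(2|H|^2+3|T|^2)$, so proving that $|T|$ is constant is equivalent to proving that $|A_H|$ is constant, and I would phrase everything in terms of $|T|^2$. Throughout, write $JH=T+N$ with $N=(JH)^{\perp}$.

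First I would extract the first-order structure of $T$. Because $J$ is parallel, $\bar\nabla_X(JH)=J\bar\nabla_XH=-JA_HX$, and splitting into tangent and normal parts via Gauss and Weingarten gives $\nabla_XT=A_NX-(JA_HX)^{\top}$ together with $\sigma(X,T)+\nabla^{\perp}_XN=-(JA_HX)^{\perp}$. Pairing the first relation with $T$ and invoking the second biharmonic equation $(JT)^{\top}=0$ — so that $JT$ is normal and $\langle JA_HX,T\rangle=-\langle A_HX,JT\rangle=0$ since $A_HX$ is tangent — I obtain $\grad|T|^2=2A_NT$, and likewise $\Div T=\trace A_N=2\langle H,N\rangle=2\langle H,JH\rangle=0$. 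Thus the whole statement reduces to showing $A_NT=0$, i.e.\ that $(JH)^{\perp}$ is a relative-nullity direction for $T$.

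Next I would localise. The identity $(JT)^{\top}=\cos\theta\,(JT\text{ rotated})$ shows that the second biharmonic equation forces the K\"ahler angle to satisfy $\cos\theta=0$ on the open set $\{T\neq0\}$, i.e.\ the surface is totally real there; on that set $\nabla_XT=A_NX$, so $T^{\flat}$ is a harmonic $1$-form and, crucially, every ambient-curvature term proportional to $\cos\theta$ in the Gauss and Codazzi equations drops out. I would then compute $\Delta|T|^2=2\Div(A_NT)$, feeding in: the Codazzi equation for $A_N$ (whose curvature defect vanishes, and for which $\langle H,\nabla^{\perp}_XN\rangle=X\langle H,N\rangle-\langle\nabla^{\perp}_XH,N\rangle=0$ because $\nabla^{\perp}H=0$); the expression $\nabla^{\perp}_XN=-JA_HX-\sigma(X,T)$ from the splitting above; and the $JT$-component of the first biharmonic equation, namely $\langle\trace\sigma(\cdot,A_H\cdot),JT\rangle=-\frac{5\rho}{4}|T|^2$. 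I would also record that the Codazzi equation for $A_H$ together with $(JT)^{\top}=0$ yields $\Div A_H=0$. Combining these identities with the Gauss equation should collapse the cross-terms and force $A_NT=0$, hence $\grad|T|^2=0$ on $\{T\neq0\}$; continuity across the boundary of the zero set and connectedness of $\Sigma^2$ then give that $|T|$ is constant on all of $\Sigma^2$ (either $T\equiv0$, or $T$ never vanishes and $|T|^2$ is constant).

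The hard part is exactly this last computation. Every first-order manipulation of $|T|^2$ one writes down turns out to be tautological — it reduces to $0=0$ — so constancy cannot be squeezed out algebraically and one is genuinely forced to second order; for the same reason the holomorphic differential $Q^{(2,0)}$ alone does not help, since a traceless Codazzi tensor need not have constant norm. The recalcitrant quantity is the component of $\sigma$ in $(JT\Sigma)^{\perp}\cap N\Sigma$, the genuinely higher-codimension part, on which the purely algebraic relations leave $A_NT$ undetermined. The decisive inputs that must pin it down are the parallelism $\nabla^{\perp}H=0$ and the covariant derivative of the first biharmonic equation processed through the Codazzi equation.
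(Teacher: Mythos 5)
Your reduction is correct and coincides with the paper's first step: from $\bar\nabla_X(JH)=-JA_HX$ and $(JT)^{\top}=0$ one gets $\langle\nabla_XT,T\rangle=\langle A_NX,T\rangle$, so the proposition is equivalent to $\langle A_NX,T\rangle=0$ for all tangent $X$. The gap is everything after that: you never prove this. The proposed second-order computation of $\Delta|T|^2=2\Div(A_NT)$ is only a plan (``should collapse the cross-terms''), and your guiding claim --- that constancy ``cannot be squeezed out algebraically'' and that one is ``genuinely forced to second order'' --- is false. The paper obtains $\langle A_NX,T\rangle=0$ by a purely pointwise, zeroth-order algebraic argument from three facts, two of which are absent from your proposal: (i) $\trace A_N=0$ (which you do record); (ii) $\trace(A_HA_N)=0$, obtained by pairing the \emph{first} biharmonic equation of \eqref{eq:bih} with $N$ and using $\langle H,N\rangle=\langle JT,N\rangle=0$; and (iii) $[A_H,A_N]T=0$, obtained from the Ricci equation with $V=N$, since $\nabla^{\perp}H=0$ kills $R^{\perp}(X,Y)H$ and $\langle\bar R(X,T)H,N\rangle=0$ because $\langle JX,N\rangle=\langle JT,N\rangle=\langle X,JT\rangle=0$. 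At a point where $A_H$ has distinct eigenvalues $\lambda_1\neq\lambda_2$, diagonalizing $A_H$ and writing $A_N=\left(\begin{smallmatrix}a&b\\ b&-a\end{smallmatrix}\right)$, (ii) gives $(\lambda_1-\lambda_2)a=0$, hence $a=0$, and then (iii) forces $b=0$ or $T=0$ at that point; either way $\langle A_NX,T\rangle=0$ there.

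The second missing ingredient is the treatment of umbilic points, which the algebraic argument requires: the paper invokes analyticity of $A_H-\mu\id$ to conclude that either $\Sigma^2$ is pseudo-umbilical everywhere --- in which case $T=0$ by Sato's theorem and there is nothing to prove --- or the set where $A_H$ has distinct eigenvalues is open and dense, and $X(|T|^2)=0$ extends to all of $\Sigma^2$ by continuity. Your proposal has no mechanism for the umbilic set, and since the central computation is not carried out, the proof as written does not establish the proposition. (Your side observations are fine: $(JT)^{\top}=0$ together with $T\neq0$ does force $\cos\theta=0$, and $\langle\trace\sigma(\cdot,A_H\cdot),JT\rangle=-\tfrac{5\rho}{4}|T|^2$ is correct; they are simply not the inputs that close the argument.)
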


\begin{proof} The map
$p\in\Sigma^2\rightarrow(A_H-\mu\id)(p)$, where $\mu$ is a constant, is analytic, and,
therefore, either $\Sigma^2$ is a pseudo-umbilical surface (at every
point), or $H$ is an umbilical direction on a closed set without interior
points (see \cite{AdCT,DF}). We shall denote by $W$ the set of points where $H$ is not an
umbilical direction. Since in the second case this set is open and dense in
$\Sigma^2$, when the surface is not pseudo-umbilical we shall work on $W$ and then extend our results throughout $\Sigma^2$ by continuity.

If $\Sigma^2$ is pseudo-umbilical then $JH$ is normal to the surface, i.e., $T=0$ on the surface (see \cite{Sato1}).

Let us assume now that $\Sigma^2$ is not pseudo-umbilical and let $N$ be the normal part of $JH$. Then, for any vector field $X$ tangent to the surface, we have
\begin{align*}
\bar\nabla_XJH&=-J\bar\nabla_XH=-JA_HX\\&=\nabla_XT+\sigma(X,T)-A_NX+\nabla^{\perp}_XN
\end{align*}
and, therefore,
\begin{equation}\label{eq:t1}
\langle\nabla_XT,T\rangle=\langle A_NX,T\rangle+\langle A_HX,JT\rangle=\langle A_NX,T\rangle,
\end{equation}
since, from the second equation of \eqref{eq:bih}, we know that $JT$ is normal.

It easy to see that
$$
\langle N,H\rangle=0\quad\textnormal{and}\quad\langle N,JT\rangle=0
$$
and, again using \eqref{eq:bih}, that
$$
\langle N,JX\rangle=0,\quad\forall X\in C(T\Sigma^2).
$$
Then, from the first equation of \eqref{eq:bih}, we get that
\begin{equation}\label{eq:t2}
\trace(A_HA_N)=0.
\end{equation}
Moreover, using the Ricci equation
\begin{equation}\label{Ricci}
\langle
R^{\perp}(X,Y)H,V\rangle=\langle[A_H,A_V]X,Y\rangle+\langle
\bar R(X,Y)H,V\rangle,\quad\forall V\in C(N\Sigma^2),
\end{equation}
we obtain 
\begin{equation}\label{eq:t3}
[A_H,A_N]T=0,
\end{equation}
since $R^{\perp}(X,Y)H=0$ and $\langle\bar R(X,T)H,N\rangle=0$, for tangent vector fields $X$ and $Y$.

Next, consider a point $p\in W$ and an orthonormal basis $\{e_1,e_2\}$ in $T_p\Sigma^2$ such that $A_He_i=\lambda_ie_i$, $i\in\{1,2\}$. Obviously, we have $\lambda_1\neq\lambda_2$ and we can write $A_H$ and $A_N$ with respect to $\{e_1,e_2\}$ as
$$
A_H=\left(\begin{array}{cc}\lambda_1&0\\0&\lambda_2\end{array}\right)\quad\textnormal{and}\quad A_N=\left(\begin{array}{cc}a&b\\b&-a\end{array}\right),
$$
since $N\perp H$, i.e., $\trace A_N=0$. From \eqref{eq:t2} we get $a=0$ and then \eqref{eq:t3} becomes
$$
(\lambda_2-\lambda_1)b(\langle T,e_2\rangle e_1-\langle T,e_1\rangle e_2)=0.
$$
Therefore, at $p$, we have that either $T=0$ or $b=0$. We can see that in both cases $\langle A_NX,T\rangle=0$, which implies that equation \eqref{eq:t1} reduces to
$$
X(|T|^2)=2\langle\nabla_XT,T\rangle=0
$$ 
for any tangent vector $X$. It follows that $X(|T|^2)=0$ for any tangent vector field $X$ on $\Sigma^2$, which means that $|T|$ is constant on the surface.
\end{proof}

\begin{remark}\label{r_T} If $|T|=\cst\neq 0$ we have $\nabla_XT=A_NX=0$ for any tangent vector field $X$. Indeed, if $T\neq 0$ everywhere, since $JT$ is a normal vector field, it follows that $\Sigma^2$ is a totally real surface. Then we get 
$$
\bar\nabla_XJH=-J\bar\nabla_XH=-JA_HX\in C(N\Sigma^2)
$$ 
which means that $\nabla_XT=A_NX$. On the other hand, we have $\langle\bar R(X,Y)H,N\rangle=0$ for any tangent vector fields $X$ and $Y$, and then, from the Ricci equation \eqref{Ricci}, one sees that $[A_H,A_N]=0$. Using this equation and \eqref{eq:t2} in the same way as in the proof of Proposition \ref{p_cst}, and since $T\neq 0$ implies that $H$ is not umbilical on an open dense set, we obtain $A_N=0$ on this set and, therefore, on the whole surface. 
\end{remark}

\begin{proposition}\label{p_umb}
If $\Sigma^2$ is a complete proper-biharmonic pmc surface in $\mathbb{C}P^n(\rho)$ with non-negative Gaussian curvature $K$ and $T=0$, then $n\geq 3$ and $\Sigma^2$ is pseudo-umbilical and totally real. Moreover, the mean curvature of $\Sigma^2$ is $|H|=\sqrt{\rho}/2$.
\end{proposition}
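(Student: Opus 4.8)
The plan is to use the hypothesis $T=0$ to collapse the biharmonic system, and then extract total reality, the codimension bound, and the precise form of $A_H$. Since $T=0$ the second equation of \eqref{eq:bih} holds trivially and the first becomes $\trace\sigma(\cdot,A_H\cdot)=\frac{\rho}{2}H$. Pairing this identity with $H$ yields $|A_H|^2=\frac{\rho}{2}|H|^2$, and pairing it with any normal field $V\perp H$ yields $\trace(A_VA_H)=0$; note that such $A_V$ is traceless, since $\trace A_V=2\langle H,V\rangle$. Being proper-biharmonic, $\Sigma^2$ is non-minimal, so $\trace A_H=2|H|^2>0$ and Theorem \ref{thm} is available.

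First I would prove total reality. As $T=0$, the field $JH$ is normal, so differentiating and using $\nabla^{\perp}H=0$ gives $\bar\nabla_X(JH)=-JA_HX$, whose tangential part is $A_{JH}X=(JA_HX)^{\top}$. Evaluating in an orthonormal eigenbasis $\{e_1,e_2\}$ of $A_H$ and imposing the symmetry of $A_{JH}$ forces $(\lambda_1+\lambda_2)\langle Je_1,e_2\rangle=0$; since $\lambda_1+\lambda_2=2|H|^2>0$ this gives $\langle Je_1,e_2\rangle=0$, i.e. the K\"ahler angle obeys $\cos\theta=0$, so $\Sigma^2$ is totally real and $A_{JH}=0$. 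The bound $n\geq 3$ is then a dimension count: $JE_1,JE_2$ are orthonormal normal fields with $\langle H,JE_i\rangle=-\langle JH,E_i\rangle=0$, so $H$ is orthogonal to $\Span\{JE_1,JE_2\}$ inside the $(2n-2)$-dimensional normal space; for $n=2$ that span is the whole normal space, forcing $H=0$, a contradiction.

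The decisive and most delicate step is pseudo-umbilicity, which I would establish by contradiction. With $T=0$ the Simons operator of Theorem \ref{thm} is $S=8|H|^2(A_H-|H|^2\id)$, so $Q^{(2,0)}$ vanishes exactly when $\Sigma^2$ is pseudo-umbilical; if it is not, Theorem \ref{thm} forces $K\equiv0$. I would then work on the open dense set $W$ where $A_H$ has distinct eigenvalues $\lambda_1\neq\lambda_2$. There the relations $\trace(A_VA_H)=0$ and $\trace A_V=0$ give $(A_V)_{11}=(A_V)_{22}=0$ for every $V\perp H$, while the Ricci equation \eqref{Ricci} for the parallel field $H$—for which $R^{\perp}(\cdot,\cdot)H=0$ and, by total reality, $\bar R(E_1,E_2)H=0$—gives $(\lambda_1-\lambda_2)(A_V)_{12}=0$ and hence $(A_V)_{12}=0$. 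Thus $A_V=0$ for all $V\perp H$, so $\sigma(X,Y)=\frac{\langle A_HX,Y\rangle}{|H|^2}H$, and the Gauss equation \eqref{Gauss} collapses to $K=\frac{\rho}{4}+\frac{\det A_H}{|H|^2}=2|H|^2>0$ on $W$, contradicting $K\equiv0$. Hence $\Sigma^2$ is pseudo-umbilical, $A_H=|H|^2\id$, and then $|A_H|^2=2|H|^4=\frac{\rho}{2}|H|^2$ yields $|H|=\sqrt{\rho}/2$. The one point demanding care is that this Ricci-equation computation genuinely needs $\bar R(E_1,E_2)H=0$, which is precisely where total reality is used to kill the $2\langle E_1,JE_2\rangle JH$ term in \eqref{curv}.
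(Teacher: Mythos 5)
Your proof is correct and follows essentially the same route as the paper: total reality via the symmetry of $A_{JH}$, the dichotomy from Theorem \ref{thm} applied to $S=8|H|^2(A_H-|H|^2\id)$, and in the flat non-pseudo-umbilical case the vanishing of all $A_V$ with $V\perp H$ on $W$ leading to $K=2|H|^2>0$ via the Gauss equation. The only (harmless) deviation is that you kill every $A_V$ uniformly by observing $\bar R(E_1,E_2)H=0$ together with the two trace conditions, whereas the paper treats $A_{JE_1},A_{JE_2}$ separately using the totally real symmetry $\langle\sigma(X,Y),JZ\rangle=\langle\sigma(X,Z),JY\rangle$; your version is a slight streamlining of the same argument.
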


\begin{proof} From Corollary \ref{coro split} we see that the pmc surface $\Sigma^2$ with $T=0$ is proper-biharmonic if and only if  
\begin{equation}\label{eq:p_umb}
\trace\sigma(\cdot,A_H\cdot)=\frac{\rho}{2}H.
\end{equation}

Now, from Theorem \ref{thm}, we know that either the Gaussian curvature $K$ vanishes identically on the surface, or there exists a point $p\in\Sigma^2$ such that $K(p)>0$ and $Q^{(2,0)}=0$ on $\Sigma^2$.

In the second case, since $T=0$ and $Q^{(2,0)}=0$, it is easy to see that $\Sigma^2$ is pseudo-umbilical and then totally real (see \cite{Sato1}). From \eqref{eq:p_umb}, we get that $|A_H|^2=(\rho/2)|H|^2$, but since $\Sigma^2$ is pseudo-umbilical, we also have $|A_H|^2=2|H|^4$, which means that $|H|=\sqrt{\rho}/2$.

If the surface is flat, we shall prove first that it is also totally real. Since $JH$ is a normal vector field to $\Sigma^2$, we have
\begin{align*}
\bar\nabla_XJH&=J\bar\nabla_XH=-JA_HX\\ 
&=-A_{JH}X+\nabla^{\perp}_XJH.
\end{align*}

Let us now consider an orthonormal basis $\{e_1,e_2\}$ in $T_p\Sigma^2$, where $p\in\Sigma^2$, such that $A_He_i=\lambda_ie_i$, $i\in\{1,2\}$. It follows that $JA_He_i=\lambda_iJe_i$ and, for $i\neq j$, we have
$$
\langle A_{JH}e_i,e_j\rangle=\langle JA_He_i,Ee_j\rangle=\lambda_i\langle Je_i,e_j\rangle.
$$
Thus, we obtained $\lambda_1\langle Je_1,e_2\rangle=\lambda_2\langle Je_2,e_1\rangle$, which means that
$$
0=(\lambda_1+\lambda_2)\langle Je_1,e_2\rangle=2|H|^2\langle Je_1,e_2\rangle.
$$
Therefore, we have $\langle Je_1,e_2\rangle=0$, i.e., $\Sigma^2$ is totally real.

In the following, we will prove that $\Sigma^2$ is also pseudo-umbilical. Assume that it is not so and we will work on the set $W$ defined in the proof of Proposition \ref{p_cst}. Let $p$ be a point in $W$, consider a basis $\{e_1,e_2\}$ in $T_p\Sigma^2$ such that $A_He_i=\lambda_ie_i$, and extend the $e_i$ to vector fields $E_i$ in a neighborhood of $p$. First, using the expression \eqref{curv} of the curvature tensor of $\mathbb{C}P^n(\rho)$, we obtain,
$\langle\bar R(E_2,E_1)H,JE_1\rangle=0$ and then, from the Ricci equation \eqref{Ricci}, $\langle [A_H,A_{JE_1}]E_1,E_2\rangle=0$, which can be written at $p$ as
$$
(\lambda_2-\lambda_1)\langle A_{JE_1}E_1,E_2\rangle=0.
$$
In the same way, we can also show that $(\lambda_1-\lambda_2)\langle A_{JE_2}E_2,E_1\rangle=0$. 

First, since $\lambda_1\neq\lambda_2$, we get that
$\langle A_{JE_1}E_1,E_2\rangle=\langle A_{JE_2}E_2,E_1\rangle=0$. Using the fact that $\Sigma^2$ is totally real, it is easy to verify that
$$
\langle\sigma(X,Y),JZ\rangle=\langle\sigma(X,Z),JY\rangle,\quad\forall X,Y,Z\in C(T\Sigma^2),
$$
and then, at $p$, we obtain
$$
\langle A_{JE_2}E_1,E_1\rangle=\langle\sigma(E_1,E_1),JE_2\rangle=\langle\sigma(E_1,E_2),JE_1\rangle=\langle A_{JE_1}E_1,E_2\rangle=0
$$
and
$$
\langle A_{JE_1}E_2,E_2\rangle=\langle A_{JE_2}E_2,E_1\rangle=0.
$$
Since $JH$ is normal to $\Sigma^2$ is equivalent to $\trace A_{JE_1}=\trace A_{JE_2}=0$, we have just proved that $A_{JE_1}=A_{JE_2}=0$ at $p$.

Next, for any normal vector field $U$ which is also orthogonal to $H$, $JE_1$, and $JE_2$, we have $\langle\bar R(X,Y)H,U\rangle=0$ and then, from the Ricci equation \eqref{Ricci}, $[A_H,A_U]=0$. Since $H$ is not umbilical on $W$, this implies that, with respect to $\{E_1,E_2\}$, we have, at $p$,
$$
A_H=\left(\begin{array}{cc}a+|H|^2&0\\0&-a+|H|^2\end{array}\right)\quad\textnormal{and}\quad A_U=\left(\begin{array}{cc}b&0\\0&-b\end{array}\right),
$$
with $a\neq 0$. From \eqref{eq:p_umb} we have that $\trace(A_HA_U)=0$, which implies, using the above expressions, that $A_U=0$.

Now, we consider a local orthonormal frame field in the normal bundle of $\Sigma^2$, as follows $\{E_3=H/|H|,E_4=JE_1,E_5=JE_2,E_6,\ldots,E_{2n}\}$ and,
since the surface is flat, from the Gauss equation \eqref{Gauss} of $\Sigma^2$ in $\mathbb{C}P^n(\rho)$, at $p$ we get
$$
0=K=\frac{\rho}{4}+\sum_{\alpha=3}^{2n}\det A_{\alpha}=\frac{\rho}{4}+\det A_3=\frac{\rho}{4}+|H|^2-\frac{a^2}{|H|^2}.
$$
From \eqref{eq:p_umb} we have $|A_H|^2=2a^2+2|H|^4=(\rho/2)|H|^2$ and, therefore, $K=2|H|^2$ at $p$, which means that $|H|=0$. This is a contradiction, since $\Sigma^2$ is proper-biharmonic. Hence, the surface is pseudo-umbilical in this case too.

Finally, we have that, for any vector field $X$ tangent to the surface, the vector field $JX$ is normal and orthogonal to both $H$ and $JH$, which are also normal vector fields. Therefore, one obtains $n\geq 3$ and we conclude.
\end{proof}

\begin{proposition}\label{t_neq_0}
If $\Sigma^2$ is a complete proper-biharmonic pmc surface in $\mathbb{C}P^n(\rho)$ with non-negative Gaussian curvature $K$ and $T\neq 0$, then the surface is flat and $\nabla A_H=0$.
\end{proposition}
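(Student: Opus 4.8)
The plan is to turn the single hypothesis $T\neq 0$ into strong rigidity by invoking the structural facts already established, and then to read off both conclusions almost mechanically. First I note that, since $T\neq 0$ at one point and $|T|$ is constant by Proposition~\ref{p_cst}, the field $T$ is nowhere zero with $|T|=\cst\neq 0$. Hence Remark~\ref{r_T} applies and yields that $T$ is a \emph{parallel} tangent vector field, $\nabla_XT=0$ for all $X$, that $A_N=0$, and that $\Sigma^2$ is totally real. This parallel nowhere-zero field is the engine of the whole argument.

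For flatness, I would use that a nowhere-zero parallel tangent field forces $K=0$ on a surface. Since $\nabla T=0$ we have $R(X,Y)T=0$ for all $X,Y$; writing $e_1=T/|T|$ and choosing $e_2\perp e_1$, and using the two-dimensional identity $R(X,Y)Z=K(\langle Y,Z\rangle X-\langle X,Z\rangle Y)$, one gets $0=R(e_2,e_1)T=K|T|e_2$, so $K=0$. This is also consistent with Theorem~\ref{thm}: the alternative with a point where $K>0$ is now excluded, so we are necessarily in the flat case.

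For $\nabla A_H=0$, I would differentiate the operator $S$ of \eqref{eq:S}. Because $\Sigma^2$ is complete, non-minimal (it is proper-biharmonic, so $|H|\neq 0$), pmc, and has $K\geq 0$, Theorem~\ref{thm} applies and Remark~\ref{r_thm} gives $\nabla S=0$. In $S=8|H|^2A_H+3\rho\langle T,\cdot\rangle T-\big(\frac{3\rho}{2}|T|^2+8|H|^4\big)\id$ every term other than $8|H|^2A_H$ is now parallel: $|H|$ is constant (pmc), $|T|$ is constant (Proposition~\ref{p_cst}), the identity is parallel, and since $T$ is parallel so is the operator $\langle T,\cdot\rangle T$. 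Hence $0=\nabla_XS=8|H|^2\,\nabla_XA_H$, and $|H|\neq 0$ forces $\nabla A_H=0$.

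The main thing to get right is the very first step, namely correctly invoking Remark~\ref{r_T} to obtain that $T$ is parallel, since that one fact simultaneously produces flatness and makes all the non-$A_H$ pieces of $S$ parallel. After that no genuine computation remains; the only routine checks are the two-dimensional curvature identity giving $K=0$ and the parallelism of $\langle T,\cdot\rangle T$, both of which are immediate from $\nabla T=0$.
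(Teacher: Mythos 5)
Your proof is correct, but it takes a genuinely different route from the paper's. The paper does not use Remark~\ref{r_T} in this proof at all: it invokes the Chen--Ogiue result that the $(2,0)$-part of $\widetilde Q(X,Y)=\langle A_HX,Y\rangle$ is holomorphic on a totally real pmc surface, deduces that the traceless operator $\phi_H=A_H-|H|^2\id$ satisfies a Codazzi equation and hence the Simons-type identity $\tfrac12\Delta|\phi_H|^2=2K|\phi_H|^2+|\nabla\phi_H|^2$, and then argues by contradiction for flatness (if $K(p)>0$ somewhere, Theorem~\ref{thm} gives $S=0$, which forces $|\phi_H|^2=\tfrac{9\rho^2}{128|H|^4}|T|^4$ to be a nonzero constant, whence $K\equiv 0$, a contradiction); finally $\nabla\phi_H=0$ is obtained from parabolicity applied to the bounded subharmonic function $|\phi_H|^2$. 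You replace all of this with the single structural fact that $T$ is a nowhere-zero \emph{parallel} tangent field, which is legitimate: Proposition~\ref{p_cst} gives $|T|=\cst\neq 0$, and Remark~\ref{r_T} (whose hypotheses --- proper-biharmonic pmc, $T\neq 0$ everywhere --- are met) gives $\nabla T=0$ and $A_N=0$. Flatness is then immediate from $R(\cdot,\cdot)T=0$ in dimension two, and $\nabla A_H=0$ drops out of $\nabla S=0$ (Remark~\ref{r_thm}, applicable since proper-biharmonic pmc gives $|H|=\cst>0$, so Theorem~\ref{thm} applies) because every other term in the definition \eqref{eq:S} of $S$ is manifestly parallel once $|H|$, $|T|$ are constant and $\nabla T=0$. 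Your argument is shorter and more elementary --- it avoids the holomorphic differential $\widetilde Q^{(2,0)}$ and the second parabolicity argument entirely --- while the paper's version is self-contained modulo \cite{CO} and incidentally records the explicit value of $|\phi_H|^2$ in the excluded branch; nothing in your chain of citations is circular, since both remarks are established before this proposition.
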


\begin{proof} Since $|T|=\cst\neq 0$ on $\Sigma^2$, from the second equation of \eqref{eq:bih}, we know that our surface is totally real. In \cite{CO} it is proved that the $(2,0)$-part $\widetilde Q^{(2,0)}$ of the quadratic form
$$
\widetilde Q(X,Y)=\langle A_HX,Y\rangle,
$$
defined on a pmc totally real surface, is holomorphic. Consider the traceless part $\phi_H=A_H-|H|^2\id$ of $A_H$. Since $\widetilde Q^{(2,0)}$ is holomorphic, working in the same way as in \cite[Proposition 3.3]{B}, we can prove that $\phi_H$ satisfies the Codazzi equation $(\nabla_XS)Y=(\nabla_YS)X$. Hence, from equation \eqref{delta}, we have
$$
\frac{1}{2}\Delta|\phi_H|^2=2K|\phi_H|^2+|\nabla\phi_H|^2.
$$

Let us assume now that there exists a point $p\in\Sigma^2$ such that $K(p)>0$. Then, from Theorem \ref{thm}, we have that $S=0$, which implies that
$$
|\phi_H|^2=|A_H|^2-2|H|^4=\frac{9\rho^2}{128|H|^4}|T|^4=\cst\neq 0,
$$
which means that $K=0$ on $\Sigma^2$ and this is a contradiction. 

Hence the surface is flat. Since $\Sigma^2$ is proper-biharmonic, it follows, from the first equation of \eqref{eq:bih}, that $|\phi_H|^2$ is bounded. Thus $|\phi_H|^2$ is a bounded subharmonic function on a parabolic space and, therefore, a constant, which implies $\nabla A_H=\nabla\phi_H=0$.
\end{proof}

\begin{remark} In the proof of Proposition \ref{t_neq_0} we used the fact that $\widetilde Q^{(2,0)}$ is holomorphic when $\widetilde Q$ is defined on a totally real pmc surface in a complex space form. In \cite{LO} it is proved that, if $\Sigma^2$ is a proper-biharmonic surface with constant mean curvature in a Riemannian manifold, then $\widetilde Q^{(2,0)}$ is holomorphic.
\end{remark}

Before proving our main result, let us briefly recall a property of the Hopf fibration (see \cite{R}). Let $\pi:\mathbb{C}^{n+1}\setminus\{0\}\rightarrow\mathbb{C}P^n(\rho)$ be the natural projection and $\mathbb{S}^{2n+1}(\rho/4)=\{z\in\mathbb{C}^{n+1}:\langle z,z\rangle=4/\rho\}$. The restriction of $\pi$ to the sphere $\mathbb{S}^{2n+1}(\rho/4)\subset\mathbb{C}^{n+1}$ is the Hopf fibration $\pi:\mathbb{S}^{2n+1}(\rho/4)\rightarrow\mathbb{C}P^n(\rho)$ and it is a Riemannian submersion. Now, let $i:\Sigma^m\rightarrow \mathbb{C}P^n(\rho)$ be a totally real isometric immersion. Then this immersion can be lifted locally (or globally, if $\Sigma^m$ is simply connected) to a horizontal immersion $\widetilde i:\widetilde\Sigma^{m}\rightarrow\mathbb{S}^{2n+1}(\rho/4)$. Conversely, if $\widetilde i:\widetilde\Sigma^{m}\rightarrow\mathbb{S}^{2n+1}(\rho/4)$ is a horizontal isometric immersion, then $\pi(\widetilde i):\Sigma^m\rightarrow\mathbb{C}P^n(\rho)$ is a totally real isometric immersion. Moreover, we have $\pi_{\ast}\widetilde\sigma=\sigma$, where $\widetilde\sigma$ and $\sigma$ are the second fundamental forms of the immersions $\widetilde i$ and $i$, respectively.

We shall also use the following theorem.

\begin{theorem}[\cite{BMO1}]\label{thm:teza} Let $\Sigma^m$ be a proper-biharmonic cmc submanifold in
$\mathbb{S}^n(\rho/4)$ with mean curvature vector field $H$. Then $|H|\in(0,\sqrt{\rho}/2]$ and, moreover, $|H|=\sqrt{\rho}/2$ if and only if $\Sigma^m$ is
minimal in a small hypersphere
$\mathbb{S}^{n-1}(\rho/2)\subset\mathbb{S}^n(\rho/4)$.
\end{theorem}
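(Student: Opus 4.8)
The plan is to specialize the biharmonic system of Theorem~\ref{thm split} to the constant-curvature ambient $\mathbb{S}^n(\rho/4)$, pair the normal equation with $H$ to produce a single scalar identity, and read off the bound from Cauchy--Schwarz; the equality case will then be settled by an explicit computation in the ambient Euclidean space $\mathbb{R}^{n+1}$.

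First I would record the reduced form of the system. Writing $c=\rho/4$ for the constant sectional curvature, the curvature tensor of $\mathbb{S}^n(\rho/4)$ is $\bar R(X,Y)Z=c\{\langle Y,Z\rangle X-\langle X,Z\rangle Y\}$, whence $\trace\bar R(\cdot,H)\cdot=-cmH$, which is purely normal. Since $\Sigma^m$ is cmc we have $\grad|H|^2=0$, so the system becomes
\[
-\Delta^{\perp}H+\trace\sigma(\cdot,A_H\cdot)-cmH=0,\qquad \trace A_{\nabla^{\perp}_{\cdot}H}(\cdot)=0.
\]
Taking the inner product of the first equation with $H$, using $\langle\trace\sigma(\cdot,A_H\cdot),H\rangle=|A_H|^2$ together with $\tfrac12\Delta|H|^2=\langle\Delta^{\perp}H,H\rangle+|\nabla^{\perp}H|^2=0$, yields the key identity
\[
|A_H|^2+|\nabla^{\perp}H|^2=cm|H|^2.
\]
Because $\trace A_H=m|H|^2$, Cauchy--Schwarz gives $|A_H|^2\geq m|H|^4$, hence $m|H|^4\leq cm|H|^2$ and $|H|^2\leq c=\rho/4$. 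As $\Sigma^m$ is proper-biharmonic it is non-minimal, so $|H|>0$, and therefore $|H|\in(0,\sqrt{\rho}/2]$.

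For the equality case, $|H|=\sqrt{\rho}/2$ forces equality in both estimates, so $\nabla^{\perp}H=0$ (thus $\Sigma^m$ is pmc) and $A_H=|H|^2\id$ (thus $\Sigma^m$ is pseudo-umbilical); setting $\eta=H/|H|$ we obtain $A_\eta=(\sqrt{\rho}/2)\id$ and $\nabla^{\perp}\eta=0$. I would then pass to $\mathbb{R}^{n+1}\supset\mathbb{S}^n(\rho/4)$, with position vector $x$ and flat connection $D$. Using $D_Xx=X$ and $D_X\eta=-A_\eta X=-(\sqrt{\rho}/2)X$ for $X$ tangent to $\Sigma^m$, the vector field $a:=\eta+(\sqrt{\rho}/2)\,x$ satisfies $D_Xa=0$, so $a$ is a fixed vector of $\mathbb{R}^{n+1}$ along the (connected) submanifold. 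Since $\langle\eta,x\rangle=0$ gives $\langle a,x\rangle=(\sqrt{\rho}/2)|x|^2=2/\sqrt{\rho}$ constant and $|a|^2=2$, the submanifold $\Sigma^m$ lies in an affine hyperplane whose intersection with $\mathbb{S}^n(\rho/4)$ is a small hypersphere of curvature $\rho/2$ with unit normal $\eta$. Finally, decomposing the second fundamental form along the chain $\Sigma^m\subset\mathbb{S}^{n-1}(\rho/2)\subset\mathbb{S}^n(\rho/4)$ and using that the umbilicity factor of $\mathbb{S}^{n-1}(\rho/2)$ in $\mathbb{S}^n(\rho/4)$ is exactly $A_\eta=(\sqrt{\rho}/2)\id$, one sees that the mean curvature vector of $\Sigma^m$ inside $\mathbb{S}^{n-1}(\rho/2)$ vanishes, i.e.\ $\Sigma^m$ is minimal there.

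The converse is a direct verification: if $\Sigma^m$ is minimal in a small hypersphere $\mathbb{S}^{n-1}(\rho/2)$, the same decomposition gives $H=(\sqrt{\rho}/2)\eta$, $A_H=(\rho/4)\id$ and $\nabla^{\perp}H=0$, so $\Delta^{\perp}H=0$ and $\trace\sigma(\cdot,A_H\cdot)=(\rho/4)mH=cmH$; both equations of the reduced system hold, while $|H|=\sqrt{\rho}/2>0$ shows the immersion is proper-biharmonic. I expect the main obstacle to be the equality case: pinning down the fixed vector $a$ and then carefully disentangling the three nested immersions $\Sigma^m\subset\mathbb{S}^{n-1}(\rho/2)\subset\mathbb{S}^n(\rho/4)\subset\mathbb{R}^{n+1}$—keeping track of the several connections, shape operators and normal fields—to extract minimality in the small hypersphere from pseudo-umbilicity in the big one.
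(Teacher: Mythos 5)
The paper does not prove this statement at all---it is imported verbatim from \cite{BMO1}---so the only meaningful comparison is with the argument in that reference. Your proof is correct, and for the inequality it is essentially the original one: specializing Theorem \ref{thm split} to constant curvature $c=\rho/4$, pairing the normal equation with $H$, and using $\Delta|H|^2=0$ gives the pointwise identity $|A_H|^2+|\nabla^{\perp}H|^2=cm|H|^2$, after which $|A_H|^2\geq(\trace A_H)^2/m=m|H|^4$ yields $|H|^2\leq\rho/4$, and properness rules out $|H|=0$. One convention-dependent step deserves a flag: your relation $\tfrac{1}{2}\Delta|H|^2=\langle\Delta^{\perp}H,H\rangle+|\nabla^{\perp}H|^2$ presupposes $\Delta^{\perp}=\trace(\nabla^{\perp})^2$; this is indeed the convention consistent with the paper's $\Delta=\trace(\nabla^{\varphi})^2$ and with the signs in Theorem \ref{thm split}, so your computation coheres, but with the geometers' sign the intermediate signs would flip and it is worth saying so explicitly. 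In the equality case, where \cite{BMO1} concludes by invoking the classical fact that a non-minimal pseudo-umbilical submanifold with $\nabla^{\perp}H=0$ in a sphere is minimal in a small hypersphere, you reprove that fact from scratch via the parallel vector $a=\eta+(\sqrt{\rho}/2)x$ in $\mathbb{R}^{n+1}$: your computations check out (the hyperplane $\langle a,y\rangle=2/\sqrt{\rho}$ with $|a|^2=2$ cuts $\mathbb{S}^n(\rho/4)$ in a hypersphere of curvature $\rho/4+\rho/4=\rho/2$ whose unit normal along $\Sigma^m$ is exactly $\eta$, and the umbilicity factor $\sqrt{\rho}/2$ absorbs all of $H$, forcing the inner mean curvature to vanish), and the converse verification against both equations of the reduced system is complete, including non-minimality. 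What your route buys over the citation is self-containedness at no real cost: completeness or compactness of $\Sigma^m$ is never needed, since both the key identity and the equality analysis are purely pointwise once $|H|$ is constant.
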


We are ready now to prove our main result.

\begin{theorem}\label{main theorem} Let $\Sigma^2$ be a complete proper-biharmonic pmc surface with non-negative Gaussian curvature in $\mathbb{C}P^n(\rho)$. Then $\Sigma^2$ is totally real and either
\begin{enumerate}

\item $\Sigma^2$ is pseudo-umbilical and its mean curvature is equal to $\sqrt{\rho}/2$. Moreover, 
$$
\Sigma^2=\pi(\widetilde\Sigma^2)\subset\mathbb{C}P^n(\rho),\quad n\geq 3,
$$
where $\pi:\mathbb{S}^{2n+1}(\rho/4)\rightarrow\mathbb{C}P^n(\rho)$ is the Hopf fibration and the horizontal lift $\widetilde\Sigma^2$ of $\Sigma^2$ is a complete minimal surface in a small hypersphere $\mathbb{S}^{2n}(\rho/2)\subset\mathbb{S}^{2n+1}(\rho/4)$; or

\item $\Sigma^2$ lies in $\mathbb{C}P^2(\rho)$ as a complete Lagrangian proper-biharmonic pmc surface. Moreover, if $\rho=4$, then
$$
\Sigma^2=\pi\Big(\mathbb{S}^1\Big(\sqrt{\frac{9\pm\sqrt{41}}{20}}\Big)\times\mathbb{S}^1\Big(\sqrt{\frac{11\mp\sqrt{41}}{40}}\Big)\times\mathbb{S}^1\Big(\sqrt{\frac{11\mp\sqrt{41}}{40}}\Big)\Big)\subset\mathbb{C}P^2(4),
$$
where $\pi:\mathbb{S}^{5}(1)\rightarrow\mathbb{C}P^2(4)$ is the Hopf fibration; or

\item $\Sigma^2$ lies in $\mathbb{C}P^3(\rho)$ and
$$
\Sigma^2=\gamma_1\times\gamma_2\subset\mathbb{C}P^3(\rho),
$$
where $\gamma_1:\mathbb{R}\rightarrow\mathbb{C}P^2(\rho)\subset \mathbb{C}P^3(\rho)$ is a holomorphic helix of order $4$ with curvatures
$$
\kappa_1=\sqrt{\frac{7\rho}{6}},\quad \kappa_2=\frac{1}{2}\sqrt{\frac{5\rho}{42}},\quad \kappa_3=\frac{3}{2}\sqrt{\frac{\rho}{42}},
$$
and complex torsions
$$
\tau_{12}=-\tau_{34}=\frac{11\sqrt{14}}{42},\quad \tau_{23}=-\tau_{14}=\frac{\sqrt{70}}{42},\quad \tau_{13}=\tau_{24}=0,
$$
and $\gamma_2:\mathbb{R}\rightarrow\mathbb{C}P^3(\rho)$ is a holomorphic circle with curvature $\kappa=\sqrt{\rho/2}$ and complex torsion $\tau_{12}=0$. Moreover, the curves $\gamma_1$ and $\gamma_2$ always exist and are unique up to holomorphic isometries.
\end{enumerate}
\end{theorem}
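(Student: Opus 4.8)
The plan is to run the argument along the dichotomy supplied by Proposition \ref{p_cst}: since $|T|$ is constant, either $T=0$ or $T\neq 0$ everywhere, and these two branches produce, respectively, case (1) and cases (2)--(3).

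I would first dispose of the case $T=0$, where Proposition \ref{p_umb} already yields that $n\geq 3$, that $\Sigma^2$ is pseudo-umbilical and totally real, and that $|H|=\sqrt{\rho}/2$, while the biharmonic condition reduces to \eqref{eq:p_umb}. Being totally real, $\Sigma^2$ lifts (passing to the universal cover if necessary) through the Hopf fibration $\pi:\mathbb{S}^{2n+1}(\rho/4)\to\mathbb{C}P^n(\rho)$ to a complete horizontal surface $\widetilde\Sigma^2$ with $\pi_\ast\widetilde\sigma=\sigma$, so that $|\widetilde H|=|H|=\sqrt{\rho}/2$ and $\widetilde\Sigma^2$ is cmc. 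The one point to verify is that biharmonicity is preserved: inserting the constant-curvature tensor of $\mathbb{S}^{2n+1}(\rho/4)$ into the system of Theorem \ref{thm split} collapses it, for a cmc surface, to the single equation $\trace\widetilde\sigma(\cdot,\widetilde A_{\widetilde H}\cdot)=\frac{\rho}{2}\widetilde H$, which under $\pi_\ast\widetilde\sigma=\sigma$ is exactly \eqref{eq:p_umb}. Thus $\widetilde\Sigma^2$ is proper-biharmonic cmc with $|\widetilde H|=\sqrt{\rho}/2$, and Theorem \ref{thm:teza} forces it to be minimal in a small hypersphere $\mathbb{S}^{2n}(\rho/2)$, which is case (1).

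The substance lies in the case $T\neq 0$. Here Proposition \ref{t_neq_0} gives that $\Sigma^2$ is flat with $\nabla A_H=0$, and Remark \ref{r_T} adds that $\Sigma^2$ is totally real with $A_N=0$ and $\nabla T=0$, where $N$ is the normal part of $JH$. I would work on the universal cover, isometric to $\mathbb{R}^2$, and fix a global parallel orthonormal frame $\{e_1,e_2\}$ diagonalizing the parallel operator $A_H$, with constant eigenvalues $\lambda_1,\lambda_2$ obeying $\lambda_1+\lambda_2=2|H|^2$; the parallel field $T$ then has constant components in this frame. Feeding these parallel data, the totally real identity $\langle\sigma(X,Y),JZ\rangle=\langle\sigma(X,Z),JY\rangle$, and the biharmonic equation $\trace\sigma(\cdot,A_H\cdot)=\frac{\rho}{4}(2H-3JT)$ from \eqref{eq:bih} into the Gauss equation \eqref{Gauss} with $K=0$ and into the Codazzi and Ricci equations, I expect to determine every component of $\sigma$ as an explicit constant depending only on $\rho,|H|,\lambda_1,\lambda_2,|T|$, and to be left with a finite algebraic system in these scalars.

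The main obstacle is the explicit integration of this rigid structure. With $\sigma$ of constant components, its first normal space $N_1$ has constant dimension, and $\nabla A_H=0$, $A_N=0$ make the osculating bundle $T\Sigma^2\oplus N_1$ parallel; enclosing it in the smallest $J$-invariant parallel subbundle, of constant complex rank $k\in\{2,3\}$, the Eschenburg--Tribuzy reduction of codimension in \cite{ET} places $\Sigma^2$ in a totally geodesic $\mathbb{C}P^k(\rho)$. I would separate the outcomes by this rank. When $T\Sigma^2\oplus J(T\Sigma^2)$ already exhausts the osculating bundle, $\Sigma^2$ is Lagrangian in $\mathbb{C}P^2(\rho)$, giving case (2); solving the algebraic system for $\rho=4$ identifies it, through the Hopf fibration $\mathbb{S}^5(1)\to\mathbb{C}P^2(4)$, as the projection of the flat torus $\mathbb{S}^1\times\mathbb{S}^1\times\mathbb{S}^1$ with the stated radii. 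When the osculating bundle has complex rank $3$, the parallel splitting of $T\Sigma^2$ into the two eigendirections of $A_H$ integrates $\Sigma^2$ as a Riemannian product $\gamma_1\times\gamma_2\subset\mathbb{C}P^3(\rho)$, and reading off the Frenet data of the factors from the now fully determined $\sigma$ yields the listed curvatures $\kappa_i$ and complex torsions $\tau_{ij}$; Theorems \ref{MA1} and \ref{MA2} then guarantee that such a holomorphic helix $\gamma_1$ and holomorphic circle $\gamma_2$ exist and are unique up to holomorphic isometries, completing case (3). The hardest step throughout is this last identification---resolving the polynomial system for $\lambda_1,\lambda_2$ and the remaining entries of $\sigma$---which is what pins down the precise numbers $\kappa_i$, $\tau_{ij}$ and the radii $\sqrt{(9\pm\sqrt{41})/20}$, $\sqrt{(11\mp\sqrt{41})/40}$.
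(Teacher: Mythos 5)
Your proposal is correct and follows essentially the same route as the paper: the dichotomy on the constant $|T|$ from Proposition \ref{p_cst}, the Hopf lift combined with Theorem \ref{thm:teza} for $T=0$, and for $T\neq 0$ the Eschenburg--Tribuzy reduction of codimension followed by the de Rham splitting into two curves whose Frenet data are pinned down by the Ricci, Gauss, and biharmonicity equations. The only substantive deviations are points of citation rather than of method: the paper invokes \cite[Theorem 3.3]{FLMO} for the equivalence of biharmonicity under the Hopf lift (note that your direct collapse of the system in Theorem \ref{thm split} requires the lift to be pmc, not merely cmc, which is supplied by Reckziegel's theorem), and it quotes Sasahara's classification \cite{S-GJM} for the explicit $\rho=4$ tori in case (2) rather than re-solving that Lagrangian algebraic system from scratch.
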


\begin{proof} Let $\Sigma^2$ be a complete proper-biharmonic pmc surface with non-negative Gaussian curvature $K$ and mean curvature vector field $H$ in $\mathbb{C}P^n(\rho)$. Let $T$ and $N$ be the tangent and the normal parts of $JH$, respectively. As we have seen in Proposition \ref{p_cst}, the length of $T$ is constant along the surface. We shall consider two cases, as $T=0$ or $T\neq 0$ on $\Sigma^2$.

{\bf Case I: $T=0$.} From Proposition \ref{p_umb} we know that $n\geq 3$ and the surface is pseudo-umbilical and totally real with mean curvature $|H|=\sqrt{\rho}/2$. Consider the Hopf fibration $\pi:\mathbb{S}^{2n+1}(\rho/4)\rightarrow\mathbb{C}P^n(\rho)$ and the horizontal lift $\widetilde\Sigma^2$ of $\Sigma^2$ in $\mathbb{S}^{n+1}(\rho/4)$. Then, from \cite[Theorem 1]{R}, we have that $\widetilde\Sigma^2$ is pseudo-umbilical in $\mathbb{S}^{2n+1}(\rho/4)$ and has parallel mean curvature vector field. Moreover, its mean curvature is constant and equal to $\sqrt{\rho}/2$. Next, using the relation between the bitension fields of the immersions $i:\Sigma^2\rightarrow \mathbb{C}P^n(\rho)$ and $\widetilde i:\widetilde\Sigma^2\rightarrow\mathbb{S}^{2n+1}(\rho/4)$, given in \cite[Theorem 3.3]{FLMO}, together with $(JH)^{\top}=T=0$, we get that $\Sigma^2$ is proper-biharmonic if and only if $\widetilde\Sigma^2$ is proper-biharmonic. We apply Theorem \ref{thm:teza} to conclude that $\widetilde\Sigma^2$ is a complete minimal surface in a small hypersphere $\mathbb{S}^{2n}(\rho/2)\subset\mathbb{S}^{2n+1}(\rho/4)$.

{\bf Case II: $T\neq 0$.} In this case $\Sigma^2$ is totally real and, from Proposition \ref{t_neq_0}, flat. From the same Proposition \ref{t_neq_0} we also know that $\nabla A_H=0$, which means that the eigenfunctions of $A_H$ are actually constants. Since $\Sigma^2$ is pseudo-umbilical implies that $T=0$, it follows that the surface does not have umbilical points.

Now, let $U$ be a normal vector field orthogonal to $H$ and to $J(T\Sigma^2)$. Then, it is easy to see that $\langle\bar R(X,Y)H,U\rangle=0$ and, from the Ricci equation \eqref{Ricci}, we get that $[A_H,A_U]=0$. Since $H$ is not umbilical, this implies that $A_H$ and $A_U$ can be simultaneously diagonalized. 

On the other hand, the first equation of \eqref{eq:bih} shows that $\trace(A_HA_U)=0$ and, therefore, that $A_U=0$.

Let us consider the global orthonormal frame field $\{E_1=T/|T|,E_2\}$ on the surface. We know, from Remark~\ref{r_T}, that $\nabla E_1=0$ and then $\nabla E_2=0$. 

Next, if $|T|=|H|$, i.e., if $JH$ is a tangent vector field, we consider the subbundle $L=\Span\{JE_1,JE_2\}$ of the normal bundle. Since $JH$ is tangent, we get that $H\in L$ and, therefore, for any normal vector field $U\perp L$, we have $A_U=0$, which means that $\im\sigma\subset L$. It is also easy to see that $\dim(T\Sigma^2\oplus L)=4$ and $J(T\Sigma^2\oplus L)=T\Sigma^2\oplus L$, which implies that $\bar R(X,Y)Z\in L$, for all vector fields $X,Y,Z\in T\Sigma^2\oplus L$, i.e., $T\Sigma^2\oplus L$ is invariant by $\bar R$. In the following, we shall prove that $L$ is parallel, i.e., if $U$ is a normal vector field orthogonal to $L$, then $U$ is also orthogonal to $\nabla^{\perp}L$. Indeed, for all tangent vector fields $X$ and $Y$, we obtain
$$
\langle\nabla_X^{\perp}JY,U\rangle=\langle\bar\nabla_XJY,U\rangle=\langle J\bar\nabla_XY,U\rangle=\langle J\nabla_XY+J\sigma(X,Y),U\rangle=0,
$$
since $\im\sigma\subset L$ and $J(T\Sigma^2\oplus L)=T\Sigma^2\oplus L$. Therefore, since $\bar\nabla\bar R=0$, we can use \cite[Theorem~2]{ET} to show that there exists a $4$-dimensional totally geodesic submanifold of $\mathbb{C}P^n(\rho)$ such that $\Sigma^2$ lies in this submanifold. Since $J(T\Sigma^2\oplus L)=T\Sigma^2\oplus L$, we get that $\Sigma^2$ is a complete Lagrangian proper-biharmonic pmc surface in $\mathbb{C}P^2(\rho)$. When $\rho=4$, these surfaces were determined in \cite{S-GJM} as follows
$$
\Sigma^2=\pi\Big(\mathbb{S}^1\Big(\sqrt{\frac{9\pm\sqrt{41}}{20}}\Big)\times\mathbb{S}^1\Big(\sqrt{\frac{11\mp\sqrt{41}}{40}}\Big)\times\mathbb{S}^1\Big(\sqrt{\frac{11\mp\sqrt{41}}{40}}\Big)\Big)\subset\mathbb{C}P^2(4).
$$

Now, assume that $|T|<|H|$. Since $\Sigma^2$ is totally real, we can consider the following local normal orthonormal frame field
$$
\Big\{E_3=JE_1,E_4=JE_2,E_5=\frac{1}{|N|}JN,E_6=\frac{1}{|N|}N,E_7,\ldots,E_{2n}\Big\},
$$
where $E_3$, $E_4$, $E_5$, and $E_6$ are globally defined. It can be easily verified that $H$ is orthogonal to $E_4$, $E_6$, and $E_{\alpha}$, where $\alpha\in\{7,\ldots,2n\}$, and, therefore, that
\begin{equation}\label{eq:H}
H=-|T|E_3-|N|E_5.
\end{equation}

All vector fields $E_{\alpha}$, $\alpha\geq 7$, are orthogonal to $H$ and to $J(T\Sigma^2)$, which means that $A_{\alpha}=0$, $\alpha\geq 7$ and, therefore, $\im\sigma\subset L=\Span\{E_3,E_4,E_5,E_6\}$. Moreover, the bundle $T\Sigma^2\oplus L$ is invariant by $J$ and by $\bar R$. Let $U$ be a normal vector field, orthogonal to $L$. Using the facts that $\nabla T=0$, $\im\sigma\subset L=\Span\{E_3,E_4,E_5,E_6\}$, and $J(T\Sigma^2\oplus L)=T\Sigma^2\oplus L$, one obtains
$$
\langle\nabla_X^{\perp}JY,U\rangle=\langle\bar\nabla_XJY,U\rangle=\langle J\bar\nabla_XY,U\rangle=\langle J\nabla_XY+J\sigma(X,Y),U\rangle=0,
$$
$$
\langle\nabla_X^{\perp}N,U\rangle=\langle\bar\nabla_X(JH-T),U\rangle=\langle-JA_HX-\sigma(X,T),U\rangle=0,
$$
and
$$
\langle\nabla_X^{\perp}JN,U\rangle=\langle\bar\nabla_X(-H-JT),U\rangle=\langle -J\sigma(X,T),U\rangle=0,
$$
that show that $L$ is parallel. We again use \cite[Theorem~2]{ET} to conclude that $\Sigma^2$ lies in $\mathbb{C}P^3(\rho)$. 

In the following we shall determine the shape operators $A_3$, $A_4$, $A_5$, and $A_6$. First, since $N$ is orthogonal to $H$ and to $J(T\Sigma^2)$, we have $A_6=0$. 

Next, since $\Sigma^2$ is totally real, we have
$$
\langle\sigma(X,Y),JZ\rangle=\langle\sigma(X,Z),JY\rangle,\quad\forall X,Y,Z\in C(T\Sigma^2),
$$
and, using this property, together with 
$$
\trace A_3=2\langle E_3,H\rangle=-2|T|\quad\textnormal{and}\quad\trace A_4=2\langle E_4,H\rangle=0,
$$
we see that $A_3$ and $A_4$ can be written as
$$
A_3=\left(\begin{array}{cc}a-|T|&b\\b&-a-|T|\end{array}\right)\quad\textnormal{and}\quad A_4=\left(\begin{array}{cc}b&-a-|T|\\-a-|T|&-b\end{array}\right).
$$
As for $A_5$, we have $\trace A_5=2\langle E_5,H\rangle=-2|N|$ and then
$$
A_5=\left(\begin{array}{cc}c-|N|&d\\d&-c-|T|\end{array}\right).
$$

Taking into account that $E_5$ is orthogonal to $J(T\Sigma^2)$ we obtain $\langle R(X,Y)H,E_5\rangle=0$ and, from the Ricci equation \eqref{Ricci}, one sees that $[A_H,A_5]=0$ and then, from \eqref{eq:H}, that $[A_3,A_5]=0$. After a straightforward computation, we get
\begin{equation}\label{eq:1}
ad=bc.
\end{equation}

Next, we have $\langle R(E_2,E_1)H,JE_2\rangle=-(\rho/4)|T|$ and then, again using the Ricci formula \eqref{Ricci}, 
$$
\langle[A_H,A_4]E_1,E_2\rangle=\langle R(E_2,E_1)H,JE_2\rangle=-\frac{\rho}{4}|T|,
$$
which can be written as
\begin{equation}\label{eq:2}
b(b|T|+d|N|)+(a+|T|)(a|T|+c|N|)=\frac{\rho}{8}|T|.
\end{equation}

Since $\Sigma^2$ is proper-biharmonic, from the first equation of \eqref{eq:bih}, tacking into account that $E_4$ is orthogonal to $H$ and to $E_3$, we see that
$\trace(A_HA_4)=0$, which, using \eqref{eq:1}, gives
\begin{equation}\label{eq:3}
b|T|+d|N|=0.
\end{equation}
Assume now that there exists a point $p\in\Sigma^2$ such that $b\neq 0$ or $d\neq 0$ at $p$. Then, from \eqref{eq:1}, \eqref{eq:2}, and \eqref{eq:3}, we obtain that $|T|=0$ at $p$, which is a contradiction. Therefore, from \eqref{eq:3}, it follows that $b=d=0$ on $\Sigma^2$, and then equation \eqref{eq:2} becomes
\begin{equation}\label{eq:4}
(a+|T|)(a|T|+c|N|)=\frac{\rho}{8}|T|.
\end{equation}

Finally, again using the first equation of \eqref{eq:bih}, we have 
$$
\trace(A_HA_3)=-\frac{5\rho}{4}|T|\quad\textnormal{and}\quad\trace(A_HA_5)=-\frac{\rho}{2}|N|,
$$
or, equivalently,
\begin{equation}\label{eq:5}
a(a|T|+c|N|)=\frac{(5\rho-8|H|^2)|T|}{8}
\end{equation}
and
\begin{equation}\label{eq:6}
c(a|T|+c|N|)=\frac{(\rho-4|H|^2)|N|}{4},
\end{equation}
respectively. From \eqref{eq:4}, \eqref{eq:5}, and \eqref{eq:6} one obtains 
\begin{equation}\label{eq:7}
a=\frac{(5\rho-8|H|^2)|T|}{4(2|H|^2-\rho)},\quad c=\frac{(\rho-4|H|^2)|N|}{2(2|H|^2-\rho)}
\end{equation}
and
\begin{equation}\label{eq:8}
16|H|^4-10\rho|H|^2-3\rho|T|^2+2\rho^2=0.
\end{equation}

The surface $\Sigma^2$ is flat and, therefore, from its Gauss equation \eqref{Gauss} in $\mathbb{C}P^n(\rho)$, follows
$$
0=K=\frac{\rho}{4}+\sum_{\alpha=3}^{5}\det A_{\alpha},
$$
which, together with \eqref{eq:7}, gives
\begin{equation}\label{eq:9}
16|H|^4+4\rho|H|^2-48|T|^2|H|^2+22\rho|T|^2-4\rho^2=0.
\end{equation}
From \eqref{eq:8} and \eqref{eq:9} we obtain $|H|^2=\rho/3$, $|T|^2=4\rho/27$, and $|N|^2=5\rho/27$. Hence, the shape operator $A$ is given by
\begin{equation}\label{eq:10}
A_3=\frac{1}{2}\sqrt{\frac{\rho}{3}}\left(\begin{array}{cc}-\frac{11}{3}&0\\0&1\end{array}\right),\quad A_4=\frac{1}{2}\sqrt{\frac{\rho}{3}}\left(\begin{array}{cc}0&1\\1&0\end{array}\right),\quad A_5=-\frac{1}{2}\sqrt{\frac{5\rho}{3}}\left(\begin{array}{cc}-\frac{1}{3}&0\\0&1\end{array}\right).
\end{equation}

Now, since $E_1$ and $E_2$ are parallel, they determine two distributions which are mutually orthogonal, smooth, involutive and parallel. Therefore, from the de Rham Decomposition Theorem follows, also tacking into account that the surface is complete and using its universal cover if necessary, that $\Sigma^2$ is the standard product $\gamma_1\times\gamma_2$, where $\gamma_k:\mathbb{R}\rightarrow\mathbb{C}P^3(\rho)$, $k\in\{1,2\}$, are integral curves of $E_1$ and $E_2$, respectively, parametrized by arc-length, i.e., $\gamma_1'=E_1$ and $\gamma_2'=E_2$ (see \cite{KN}). In the following, we shall determine these curves in terms of their curvatures and complex torsions.

Let us denote by $\kappa_i$, $1\leq i<6$, the curvatures of $\gamma_1$ and by $\{X_j^1\}$, $1\leq j<7$, its Frenet frame field. Using \eqref{eq:10}, we first have 
$$
\bar\nabla_{E_1}E_1=\sigma(E_1,E_1)=-\frac{11}{6}\sqrt{\frac{\rho}{3}}E_3-\frac{1}{6}\sqrt{\frac{5\rho}{3}}E_5
$$
and then, the first Frenet equation of $\gamma_1$ gives 
$$
\kappa_1=\sqrt{\frac{7\rho}{6}}\quad\textnormal{and}\quad X_2^1=-\frac{11\sqrt{14}}{42}E_3-\frac{\sqrt{70}}{42}E_5.
$$

Next, since $\nabla E_1=\nabla E_2=0$ and $0=\nabla^{\perp}H=-|T|\nabla^{\perp}E_3-|N|\nabla^{\perp}E_5$, we get
$$
\langle\nabla^{\perp}_{E_1}E_3,E_4\rangle=0\quad\textnormal{and}\quad\langle\nabla^{\perp}_{E_1}E_3,E_5\rangle=0
$$
and
\begin{align*}
\langle\nabla^{\perp}_{E_1}E_3,E_6\rangle &=\frac{1}{|N|}\langle\bar\nabla_{E_1}JE_1,JH-T\rangle=\frac{1}{|N|}(\langle A_HE_1,E_1\rangle+|T|\langle\bar\nabla_{E_1}E_1,E_3\rangle)\\ &=-\langle A_5E_1,E_1\rangle=\frac{1}{6}\sqrt{\frac{5\rho}{3}}.
\end{align*}
It follows that $\nabla^{\perp}_{E_1}E_3=(1/6)\sqrt{5\rho/3}E_6$. In the same way, one obtains $\nabla^{\perp}_{E_1}E_3=(-1/3)\sqrt{\rho/3}E_6$. Thus, after a straightforward computation, we have
$$
\bar\nabla_{E_1}X_2^1=-\kappa_1E_1-\frac{1}{2}\sqrt{\frac{5\rho}{42}}E_6,
$$
which means that 
$$
\kappa_2=\frac{1}{2}\sqrt{\frac{5\rho}{42}}\quad\textnormal{and}\quad X_3^1=-E_6.
$$
It follows that 
$$
\bar\nabla_{E_1}X_3^1=\frac{1}{6}\sqrt{\frac{5\rho}{3}}E_3-\frac{1}{3}\sqrt{\frac{\rho}{3}}E_5
$$
and then
$$
\kappa_3=\frac{3}{2}\sqrt{\frac{\rho}{42}}\quad\textnormal{and}\quad X_4^1=\frac{\sqrt{70}}{42}E_3-\frac{11\sqrt{14}}{42}E_5.
$$
Finally, we get $\bar\nabla_{E_1}X_4^1=-\kappa_3X_3^1$ and, therefore, $\gamma_1$ is a helix of osculating order $4$. A simple computation gives its complex torsions
$$
\tau_{12}=-\tau_{34}=\frac{11\sqrt{14}}{42},\quad \tau_{23}=-\tau_{14}=\frac{\sqrt{70}}{42},\quad \tau_{13}=\tau_{24}=0.
$$
Hence $\gamma_1$ is a holomorphic helix of order $4$. 

Consider now the subbundle $L=\Span\{E_3,E_5,E_6\}$ in the normal bundle of $\gamma_1$. It is easy to see that $L$ is parallel and $T\gamma_1\oplus L$ is invariant by $J$ and $\bar R$. Then, since $X_2^1\in L$, we apply \cite[Theorem 2]{ET} to conclude that $\gamma_1$ lies in $\mathbb{C}P^2(\rho)$. Moreover, it can be easily verified that $\gamma_1$ is of class $I_3$. 

For the curve $\gamma_2$ we have 
$$
\bar\nabla_{E_2}E_2=\sigma(E_2,E_2)=\frac{1}{2}\sqrt{\frac{\rho}{3}}E_3-\frac{1}{2}\sqrt{\frac{5\rho}{3}}E_5,
$$
and then its first curvature is $\kappa=\sqrt{\rho/2}$ and $X_2^2=(\sqrt{6}/6)E_3-(\sqrt{30}/6)E_5$. It can be easily verified that $\nabla^{\perp}_{E_2}E_3=\nabla^{\perp}_{E_2}E_5=0$ and then one obtains $\bar\nabla_{E_2}X_2^2=-\kappa E_2$. Therefore, the curve $\gamma_2$ is a holomorphic circle in $\mathbb{C}P^3(\rho)$ with curvature $\kappa=\sqrt{\rho/2}$ and complex torsion $\tau_{12}=0$. Then, we use Theorems \ref{MA1} and \ref{MA2} to conclude.  
\end{proof}

\begin{remark} Working in the same way as in the case when $\rho=4$, considered in \cite{S-GJM}, the result in Theorem \ref{main theorem}(2) can be extended to surfaces in $\mathbb{C}P^n(\rho)$. However, for the sake of simplicity we present here only this particular case.
\end{remark}


\begin{thebibliography}{99}

\bibitem{AdCT} H.~Alencar, M.~do Carmo, and R.~Tribuzy, \textit{A
Hopf Theorem for ambient spaces of dimensions higher than three},
J. Differential Geom. 84(2010), 1--17.

\bibitem{BMO1} A.~Balmu\c s, S.~Montaldo, and C.~Oniciuc, \textit{Classification results for biharmonic submanifolds in
spheres}, Israel J. Math. 168(2008), 201--220.

\bibitem{BMO-AM} A.~Balmu\c s, S.~Montaldo, and C.~Oniciuc, \textit{Biharmonic PNMC submanifolds in spheres}, Ark. Mat., to appear, arXiv:1110.4258.

\bibitem{BO} A.~Balmu\c s and C.~Oniciuc, \textit{Biharmonic submanifolds with parallel mean curvature vector field in spheres}, J. Math. Anal. Appl. 386(2012), 619--630.

\bibitem{B} M.~Batista, \textit{Simons type equation in $\mathbb{S}^2\times\mathbb{R}$ and
$\mathbb{H}^2\times\mathbb{R}$ and applications}, Ann. Inst.
Fourier (Grenoble) 61(2011), 1299--1322.

\bibitem{BYC} B. Y.~Chen, \textit{A report on submanifolds of finite type}, Soochow J.
Math. 22(1996), 117--337.

\bibitem{CO} B.-Y. Chen and K. Ogiue, {\it On totally real submanifolds}, Trans. Amer. Math. Soc. 193(1974), 257--266.

\bibitem{CY} S.-Y. Cheng and S.-T. Yau, \textit{Hypersurfaces with constant scalar curvature}, Math. Ann. 225(1977), 195--204.

\bibitem{JEJS} J.~Eells and J. H.~Sampson, \textit{Harmonic
mappings of Riemannian manifolds}, Amer. J. Math. 86(1964),
109--160.

\bibitem{ET} J. H.~Eschenburg and R.~Tribuzy, \textit{Existence and
uniqueness of maps into affine homogeneous spaces}, Rend. Sem.
Mat. Univ. Padova 89(1993), 11--18.

\bibitem{DF} D. Fetcu, \textit{Surfaces with parallel mean curvature vector in complex space forms}, J. Differential Geom. 91(2012), 215--232.

\bibitem{FLMO} D.~Fetcu, E.~Loubeau, S.~Montaldo, and C.~Oniciuc,
\textit{Biharmonic submanifolds of $\mathbb{C}P^n$}, Math. Z. 266(2010), 505--531.

\bibitem{FO-TMJ} D. Fetcu and C. Oniciuc, {\it Biharmonic integral $\mathcal{C}$-parallel submanifolds in
$7$-dimensional Sasakian space forms}, Tohoku Math. J. 64(2012), 195--222.

\bibitem{FOR} D. Fetcu, C. Oniciuc, and H. Rosenberg, {\it Biharmonic submanifolds with parallel mean
curvature in $\mathbb{S}^n\times\mathbb{R}$}, J. Geom. Anal., to appear, arXiv:1109.6138.

\bibitem{H} A.~Huber, \textit{On subharmonic functions and differential geometry in the
large}, Comment. Math. Helv. 32(1957), 13--71.

\bibitem{GYJ} G. Y.~Jiang, \textit{$2$-harmonic maps and
their first and second variational formulas}, Chinese Ann. Math.
Ser. A7(4)(1986), 389--402.

\bibitem{KN} S. Kobayashi and K. Nomizu, {\it Foundations of Differential Geometry}, vol. I, Interscience Publishers, New York, London, 1963.

\bibitem{LO} E. Loubeau and C. Oniciuc, {\it Biharmonic CMC surfaces in spheres}, preprint 2013. 

\bibitem{MA} S.~Maeda and T.~Adachi, {\it Holomorphic helices in a complex space form}, Proc. Amer. Math. Soc. 125(1997), 1197--1202.

\bibitem{SMYO} S.~Maeda and Y.~Ohnita, {\it Helical geodesic immersions into complex space forms}, Geom. Dedicata 30(1989), 93--114.
 
\bibitem{U} S. Maeta and H. Urakawa, {\it Biharmonic Lagrangian submanifolds in K\"ahler manifolds}, Glasg. Math. J., to appear, arXiv:1203.4092.

\bibitem{OW} Y.-L. Ou and Z.-P. Wang, \textit{Constant mean curvature and totally umbilical biharmonic surfaces in $3$-dimensional geometries}, J. Geom. Phys. 61(2011), 1845--1853.

\bibitem{R} H. Reckziegel, {\it Horizontal lifts of isometric immersions into the bundle space of a pseudo-Riemannian submersion}, Global differential geometry and global analysis 1984 (Berlin, 1984), 264--279, Lecture Notes in Math., 1156, Springer, Berlin, 1985.

\bibitem{S-GJM} T. Sasahara, {\it Biharmonic Lagrangian surfaces of constant mean curvature in complex space forms}, Glasg. Math. J. 49(2007), 497--507.

\bibitem{Sato1} N.~Sato, \textit{Totally real submanifolds of a
complex space form with nonzero parallel mean curvature vector},
Yokohama Math. J. 44(1997), 1--4.

\bibitem{JS} J.~Simons, \textit{Minimal varieties in Riemannian
manifolds}, Ann. of Math. 88(1968), 62--105.

\bibitem{Z} W. Zhang, {\it New examples of biharmonic submanifolds in $\mathbb{C}P^n$ and $\mathbb{S}^{2n+1}$}, An. \c Stiin\c t. Univ. Al. I. Cuza Ia\c si. Mat. (N.S.) 57(2011), 207--218.

\end{thebibliography}
\end{document}